\documentclass[12pt]{article}
\usepackage{amsmath}
\usepackage[dvipsnames,usenames]{color}
\usepackage{amsfonts}
\usepackage{mathrsfs}
\usepackage{amssymb}
\usepackage{amsthm}
\def\h{\hbox}
\def\<{\langle}
\def\>{\rangle}
\numberwithin{equation}{section}
\def\<{\langle}
\def\>{\rangle}

\def\ra{\rightarrow}
\def\p{\partial}
\def\a{\alpha}

\def\B{\mathcal B}
\def\O{\Omega}
\def\l{{\lambda}}

\def\PP{{\mathbb P}}
\def\BB{{\mathbb B}}
\def\F{\mathcal F}

\def \sm{\setminus}
\def\-{\overline}

\def\o{\omega}
\def\ov{\overline}
\def\e{\epsilon}

\def\L{\Lambda}
\def\h{\hbox}
\def\d{\delta}

\def\d{\delta}
\def\d{\delta}
\def\b{\beta}
\def\a{\alpha}

\def\RR{{\Bbb R}}
\def\CC{{\Bbb C}}

\def\NN{{\Bbb N}}

\def\BB{{\Bbb B}}

\def\ov{\overline}

\def\ld{\lambda}
\def\O{\Omega}
\def\o{\omega}

\def\sm{\setminus}
\def\L{\Lambda}

\def\h{\hbox}

\def\wt{\widetilde}
\def\ra{\rightarrow}
\def\p{\partial}

\def\zbar{\ov z}

\def\d{\delta}

\def\a{\alpha}
\def\d{\delta}

\def\Ol{\overline}

\def\d{\delta}

\def\b{\beta}
\def\dbar{\ov\partial}

\def\a{\alpha}

\def\a{\alpha}

\def\eucl{\hbox{eucl}}
\newtheorem{theorem}{Theorem}[section]
\newtheorem{lemma}[theorem]{Lemma}
\newtheorem{corollary}[theorem]{Corollary}
\newtheorem{proposition}[theorem]{Proposition}

\newtheorem{remark}[theorem]{Remark}
\date{\ }

\begin{document}
\title{\bf  On Point Separation of Bergman Space  on Stein Manifolds with Constant Holomorphic Sectional Curvature 
}

\author{{Xiaojun  Huang}\footnote{Supported in part by  DMS-2247151}
\qquad Song-Ying Li} \maketitle 
\bigskip

\begin{abstract}\vskip 3mm\footnotesize
\noindent   
We prove that the Bergman space of a Stein manifold separates points whenever its Bergman metric is well defined and has non-positive constant holomorphic sectional curvature.
 We construct examples of Stein manifolds  whose Bergman metric is well defined and has positive constant holomorphic sectional curvature, while their Bergman spaces do not separate points. 
We also construct examples of Stein manifolds whose Bergman metric is well defined and has constant scalar curvature, which can  be negative, zero, or positive, yet whose Bergman spaces  do not separate points.

Combined with previously established results in \cite{HuLi1}, this  shows that a Stein manifold cannot admit a well-defined flat Bergman metric, and that it admits a well-defined Bergman metric with negative constant holomorphic sectional curvature if and only if it is biholomorphic to the unit ball of the same dimension, possibly with a pluripolar set removed. 

Our proof is based on H\"ormander’s  estimates for $\overline\partial$- equations; the curvature condition, together with Calabi’s rigidity and extension theorems, is used to construct the required bounded strictly plurisubharmonic functions. The construction of Stein manifolds  with positive constant holomorphic sectional curvature for their Bergman metric is based on classical hyperelliptic Riemann surface theory and its higher-dimensional generalizations.

\vskip 4.5mm

\noindent {\bf 2000 Mathematics Subject Classification:}  32Q05, 32Q10, 32Q30, 53B35,
53C24
\end{abstract}


\section{Introduction}

Bergman metrics and Bergman kernels of complex manifolds are important invariants in complex analysis and geometry, with profound connections to many central problems in the field. The study of Bergman metrics dates back to the pioneering work of Stefan Bergman \cite{Ber1}, Bochner \cite{Bo}, S.~Kobayashi \cite{Ko}, and other mathematicians. Among the classical results related to the present work is a theorem of Lu \cite{Lu}, which states that a bounded domain  with complete Bergman metric has constant holomorphic sectional curvature if and only if  it is biholomorphic to the unit ball. A remarkable feature of Lu's theorem is that there is no a prior  topological assumption on $M$. Notice that a domain with complete Bergman metric is pseudoconvex or Stein by a classical result of Bremermann \cite{Bre}. However Bergman metrics of many pseudoconvex domains are not complete.

For a complex manifold  of complex dimension $n$, consider the Bergman space $A^2(M)$, consisting of holomorphic $n$-forms  $\phi$  such that  $|\int_M\phi\wedge \ov{\phi}|<\infty$.     The space  $A^2(M)$ is said to
separate points of $M$ if for any  $p_1, p_2\in M$ there exits  $\phi\in A^2(M)$ such that  $\phi(p_1)=0$ and $\phi(p_2)\not =0$.   
 In this paper, the question of when the Bergman space $A^2(M)$   separates points of $M$  is studied, motivated by a classical folklore conjecture that attempts to generalize the Qi-Keng–Lu theorem to pseudoconvex domains with incomplete Bergman metrics. This folklore conjecture states that a bounded pseudoconvex domain has negative constant holomorphic sectional curvature if and only if it is biholomorphic to the unit ball with possibly a pluri-polar set removed. Notable partial results  had been obtained earlier by Dong–Wong \cite{DW1,DW2}. 
In \cite{HuLi1} it was completely  answered affirmatively. In fact, it was proved in \cite{HuLi1}  that a Stein manifold with a well-defined Bergman metric of negative constant holomorphic sectional curvature is biholomorphic to the unit ball with possibly a pluripolar set removed, under the additional assumption that its Bergman space separates points, which holds automatically when $M\subset \CC^n$ is a bounded domain. It was also shown in \cite{HuLi1} that a Stein manifold with a well-defined Bergman metric cannot have a flat Bergman metric if its Bergman space separates points. (This result  was  a joint work of the authors  with J. Treuer). An interesting  question that  was left open from the work of \cite{HuLi1}  is whether the point-separation assumption is actually  needed in these results. The main result of the present paper answers these questions affirmatively. More precisely, we will prove the following:(For further explanation of the terminology used here, see the next section.)

\begin{theorem}\label{mainthm1}
Let $M$ be a Stein manifold.  If  its Bergman metric is well-defined and has a   non-positive    constant holomorphic sectional curvature, then $A^2(M) $ separates points of $M$.
\end{theorem}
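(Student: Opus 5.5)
The plan is to use H\"ormander's $L^2$ estimates for $\dbar$ on $(n,1)$-forms. The weight will be built from a bounded strictly plurisubharmonic function, and the curvature hypothesis, via Calabi's rigidity and extension theorems, will supply it.

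\emph{Reduction.} Fix $p_1\neq p_2$ in $M$. Since the Bergman metric is well defined, the Bergman kernel $K(z,\bar z)$ is positive everywhere, so there is $\phi_0\in A^2(M)$ with $\phi_0(p_2)\neq 0$. If $\phi_0(p_1)=0$ we are done. Otherwise it suffices to produce $\psi\in A^2(M)$ with $\psi(p_1)=0$ and $\psi(p_2)\neq0$. If no such $\psi$ exists, every element of $A^2(M)$ vanishing at $p_1$ also vanishes at $p_2$, so $K(\cdot,\bar p_1)$ and $K(\cdot,\bar p_2)$ are proportional. The Bergman (Kobayashi) map $\iota:M\to\PP(A^2(M)^*)$ then satisfies $\iota(p_1)=\iota(p_2)$. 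The Bergman metric is $\iota^*\omega_{FS}$, so the issue is that $\iota$, a local isometric immersion, may fail to be injective.

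\emph{Curvature step.} Suppose the holomorphic sectional curvature is a constant $c\le0$. By Calabi's rigidity and extension theorems, the local isometric embedding of a neighbourhood of $p_1$ into the model space form extends along paths. For $c<0$ the model is the ball $\BB^n$ with a multiple of its Bergman metric; for $c=0$ it is $\CC^n$ with the flat metric. Since $M$ is Stein, hence connected, and we only need local potentials, I will use the following facts.
\begin{itemize}
\item For $c<0$: Calabi's theorem gives a holomorphic local isometry $F:M\to\BB^n$ defined globally by analytic continuation. It is genuinely global because the diastasis is globally defined from $\log K$ and is determined by $\iota$. Then $\rho=|F|^2$ is bounded and strictly plurisubharmonic on $M$, since $F$ is an immersion.
\item For $c=0$: the diastasis function $D(z,\bar p)=\log\frac{K(z,\bar z)K(p,\bar p)}{|K(z,\bar p)|^2}$ is the pullback of $|w|^2$ under a flat developing map $F:M\to\CC^n$. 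Since $e^{-D}=|K(z,\bar p)|^2/(K(z,\bar z)K(p,\bar p))\le 1$, suitable functions of $D$, for instance $-e^{-D}$, are bounded plurisubharmonic. In fact $-e^{-|F|^2}$ is strictly plurisubharmonic where $|F|$ is small. I need to check that $-e^{-\epsilon|F|^2}$, combined with other terms, gives strict plurisubharmonicity everywhere.
\end{itemize}
In either case the goal is a bounded function $\rho$ with $i\p\dbar\rho\ge\d\,\omega_B$ near $p_1,p_2$ and $\rho$ plurisubharmonic on $M$.

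\emph{$\dbar$ step.} Take a cutoff $\chi$ equal to $1$ near $p_2$ and supported away from $p_1$, and set $\alpha=\dbar\chi\,\phi_0$. Use the weight $\varphi=A\rho+2n\,\chi_1\log|z-p_1|$, where $\chi_1\log|z-p_1|$ is a local pole at $p_1$ in coordinates, corrected by $A\rho$ to keep the weight plurisubharmonic. Solve $\dbar u=\alpha$ with
\[
\int_M |u|^2e^{-\varphi}\le C\int_M|\alpha|^2e^{-\varphi},
\]
working on $(n,1)$-forms so that no curvature of $K_M$ enters; boundedness of $\rho$ keeps $e^{-A\rho}$ comparable to $1$. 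Then $\psi=\chi\phi_0-u$ is holomorphic and lies in $A^2(M)$. Non-integrability of $e^{-\varphi}$ at $p_1$ forces $u(p_1)=0$, so $\psi(p_1)=0$. To get $\psi(p_2)\neq0$, I would instead put the pole at $p_2$ as well and prescribe values via a two-point interpolation, solving with $\chi\phi_0$ localized at $p_2$ and forcing $u(p_2)=0$. Then $\psi(p_2)=\phi_0(p_2)\neq0$.

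\emph{Main obstacle.} The hard part is the global construction of the bounded strictly plurisubharmonic function, especially for $c=0$. There Calabi's extension must be shown to be single valued on $M$, and the boundedness of the potential has to come from $K(z,\bar p)$ rather than from the flat coordinates, which are unbounded. I would try first to work directly with $-|K(z,\bar p)|^2/(K(z,\bar z)K(p,\bar p))$ and compute its Levi form using the constant-curvature identity.
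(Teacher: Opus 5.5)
\textbf{The gap: the flat case $c=0$ is not covered.} Your $\bar\partial$ step and your treatment of $c<0$ via $\rho=|F|^2$ are essentially the paper's Proposition~\ref{1-3} and Theorem~\ref{mainthm3}. The $\bar\partial$ step here means logarithmic poles at both points, cutoff errors absorbed by a large multiple of $\rho$, and $K_M$-valued forms so that only $i\partial\bar\partial\varphi$ matters. The flat case, however, you leave open, and the candidates you propose cannot work.

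With $D=|F-F(p)|^2$, the function $-e^{-D}$ has Levi eigenvalue $e^{-D}(1-D)$ in the radial direction. This is negative where $D>1$, so $-e^{-D}$ is not plurisubharmonic, contrary to what you assert. The same failure occurs for $-e^{-\epsilon|F|^2}$ where $\epsilon|F|^2>1$. More generally, a plurisubharmonic function of $D$ alone is a convex function of $\log D$. If $|F|$ is unbounded (the only case not already settled by $|F|^2$), then $D$ takes every value in $(0,\infty)$, because $M$ is connected. So such a function is constant as soon as it is bounded above. Nor can $K(z,\bar p)$ give boundedness through Cauchy--Schwarz: $|K(z,\bar p)|^2\le K(z,\bar z)K(p,\bar p)$ bounds $K(z,\bar z)$ from below, whereas an upper bound is what is needed.

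\textbf{The missing idea: an upper bound on the Bergman kernel in flat coordinates.} Normalize $F(p_0)=0$ and write $\phi_0$ as a multiple of $dF_1\wedge\cdots\wedge dF_n$. The paper takes $\sigma=|F|^2+\log|\phi_0|^2$, which is in effect $\log k_M$ in the flat coordinates $w=F$. It is strictly plurisubharmonic because $|F|^2$ is and $\log|\phi_0|^2$ is pluriharmonic.

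Here $\phi_0$ comes from Calabi rigidity (or D'Angelo's lemma) applied to $1+\sum_{j\ge1}|\phi_j/\phi_0|^2=e^{|F|^2}$. This shows that $\{\phi_0F^\beta/\sqrt{\beta!}\}_\beta$ is an orthonormal basis of $A^2(M)$, so base-point freeness forces $\phi_0$ to be nowhere zero.

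Orthonormality says that $\nu=F_*(b_n\phi_0\wedge\overline{\phi_0})$ has moments $\int w^\alpha\bar w^\beta\,d\nu=\alpha!\,\delta_{\alpha\beta}$. These are the moments of the Gaussian $\pi^{-n}e^{-|w|^2}dv$, and by determinacy of this moment problem $\nu$ equals that Gaussian. Comparing $\nu(B(F(q),\delta))$ with the contribution of the single sheet of $F$ through $q$ then gives $(2\pi)^ne^{|F(q)|^2}|\phi_0(q)|^2\le 1$ on $M$.

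Thus $\sigma$ is bounded above, though it need not be bounded below. That is all the $\bar\partial$ step uses, since one only needs $e^{-\varphi}\ge c>0$ to pass from the weighted to the unweighted norm. So your insistence on a bounded $\rho$ should be relaxed.

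The single-valuedness of $F$, which you list as an obstacle, is handled as for $c<0$. One has $\mathcal{B}=\mathcal{L}\circ\mathcal{P}\circ F$ with $\mathcal{L}$ and $\mathcal{P}=[1,w_1,\dots,w_n,\dots]$ injective, so $F$ is determined by the single-valued map $\mathcal{B}$.

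Without this upper bound on the Bergman kernel in flat coordinates, or some substitute for it, your argument does not reach $c=0$.
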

 It is known that the holomorphic sectional curvature $c_{\BB^n}$ of the Bergman metric on the unit ball $\BB^n\subset \CC^n$ is $-2/(n+1)$.
Combining Theorem  \ref{mainthm1} with the just discussed  results in \cite{HuLi1}, one obtains the following theorem.

\begin{theorem}\label{mainthm2}
Let $M$ be a Stein manifold of complex dimension $n$.  If  the  Bergman metric of  $M$ is well-defined and has a   non-positive    constant  holomorphic sectional curvature, denoted by $c$, then  $c=c_{\BB^n}=-2/(n+1)$ and $M$ is biholomorphic to $\BB^n\sm E \subset\CC^n$ where $\BB^n$ is the unit ball in $\CC^n$ and $E$ is a pluripolar subset of in $\BB^n$. 
\end{theorem}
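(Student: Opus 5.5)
Theorem \ref{mainthm2} follows by combining Theorem \ref{mainthm1} with the two results of \cite{HuLi1} recalled above, so the plan is a short reduction in two cases according to the sign of $c$.

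\textbf{Step 1: point separation.} Let $M$ be a Stein manifold of dimension $n$ whose Bergman metric $g_M$ is well defined and has constant holomorphic sectional curvature $c\le 0$. Theorem \ref{mainthm1} then shows that $A^2(M)$ separates points of $M$. So $M$ satisfies every hypothesis of the results in \cite{HuLi1}, which were proved under exactly this extra separation assumption.

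\textbf{Step 2: ruling out $c=0$.} By the result of \cite{HuLi1} (joint with Treuer), a Stein manifold whose Bergman metric is well defined and whose Bergman space separates points cannot have a flat Bergman metric. By Step 1, this excludes $c=0$, so $c<0$.

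\textbf{Step 3: the case $c<0$.} By the other result of \cite{HuLi1}, a Stein manifold with a well-defined Bergman metric of negative constant holomorphic sectional curvature, whose Bergman space separates points, is biholomorphic to $\BB^n\sm E$ for some pluripolar set $E\subset\BB^n$.

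\textbf{Step 4: the value of $c$.} The Bergman kernel and metric are unchanged when a closed pluripolar set is removed, because $L^2$ holomorphic $n$-forms extend across such sets. Hence the Bergman metric of $\BB^n\sm E$ is the restriction of that of $\BB^n$. The Bergman metric is a biholomorphic invariant, so $c=c_{\BB^n}=-2/(n+1)$.

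\textbf{Main obstacle.} All the substantive work is in Theorem \ref{mainthm1} and in \cite{HuLi1}. The only point to check here is that $E$ produced by \cite{HuLi1} is relatively closed in $\BB^n$, since $M$ is biholomorphic to an open set, so that the $L^2$ extension across $E$ applies. This closedness is automatic from the construction, so I expect no real obstacle in this reduction.
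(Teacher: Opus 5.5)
Your proposal is correct and follows the paper's own argument. Theorem \ref{mainthm1} supplies point separation, after which the two results of \cite{HuLi1} rule out $c=0$ and identify $M$ with $\BB^n\sm E$; your Step 4, which obtains $c=-2/(n+1)$ from $L^2$-removability of the relatively closed pluripolar set $E$ and biholomorphic invariance of the Bergman metric, is a valid explicit justification of a value the paper simply takes as part of the conclusion of \cite{HuLi1}.
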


We also mention a more recent paper of Ebenfelt--Treuer--Xiao \cite{ETX1, ETX2}, in which the authors proved that the Bergman metric of a bounded (not necessarily pseudoconvex) domain $D\subset\CC^n$ has constant negative holomorphic sectional curvature if and only if $D$ is biholomorphic to a domain of the form $\BB^n\setminus E$, where $E$ is a closed subset of measure zero that is removable for $L^2$-integrable holomorphic functions on $\BB^n\setminus E$.

In the last section, we construct Stein manifolds  whose Bergman metric has positive constant holomorphic sectional  curvature, while its Bergman space fails to separate points. We also construct Stein manifolds of higher dimension whose Bergman metric has constant scalar curvature, which can  be positive, zero, or negative, although their Bergman spaces do not separate points. More precisely, we prove the following:
\begin{proposition}\label{333}
For every \(n \in \mathbb{N}\), there exists a Stein manifold \(M\) of complex dimension \(n\), embedded in ${\mathbb C}^{n+1}$,  such that its Bergman space does not separate points in \(M\), yet the holomorphic sectional curvature of its Bergman metric is a positive constant. Moreover, there exist  Stein manifolds whose Bergman metric is well defined but whose Bergman space does not separate points, and whose scalar curvature can  be a positive, negative, or zero constant.
\end{proposition}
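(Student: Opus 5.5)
The plan is to realize the examples as double covers on which every $L^2$ holomorphic top form is the pullback of a linear object from projective space. Since the Bergman metric will then be a pullback of the Fubini--Study metric, the positive constant curvature is automatic. Point separation fails because the two sheets over a point of projective space cannot be distinguished.

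\textbf{Compact model.} Fix $n\ge1$ and let $d=n+2$. Choose a homogeneous polynomial $Q$ of degree $2d$ on $\CC^{n+1}$ whose zero set in $\PP^n$ is smooth. Let $\pi:X\to\PP^n$ be the double cover branched along $\{Q=0\}$. Then $X$ is smooth and compact, and the ramification formula gives $K_X=\pi^*(K_{\PP^n}\otimes \mathcal O(d))=\pi^*\mathcal O(1)$. Decomposing into invariant and anti-invariant parts, $H^0(X,K_X)=H^0(\PP^n,\mathcal O(1))\oplus H^0(\PP^n,\mathcal O(1-d))=H^0(\PP^n,\mathcal O(1))$. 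Concretely, in the affine chart with $P(z)=Q(1,z)$ and $X^{\rm aff}=\{w^2=P(z)\}$, the canonical sections are
\[
\frac{dz_1\wedge\cdots\wedge dz_n}{w},\quad \frac{z_j\,dz_1\wedge\cdots\wedge dz_n}{w}\quad (j=1,\dots,n).
\]
For $n=1$ this is a genus-two hyperelliptic curve $w^2=P(z)$ with $\deg P=6$.

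\textbf{The Stein manifold.} Let $D=\pi^{-1}(\{Q=0\})\cup\pi^{-1}(H_\infty)$, where $H_\infty$ is the hyperplane at infinity, and put $M=X\setminus D$. Setting $u=1/w$, we get
\[
M\cong\{(z,u)\in\CC^{n+1}: u^2P(z)=1\}.
\]
This is a smooth closed submanifold of $\CC^{n+1}$, because $P\ne0$ on it and $\partial_u(u^2P)=2uP\neq0$; hence $M$ is Stein.

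\textbf{The Bergman space.} The key analytic input is that an $L^2$ holomorphic $n$-form on $X\setminus D$ extends holomorphically across the proper analytic set $D$. This is the standard $L^2$ removability for top-degree forms: locally $\phi=f\,dz$ with $f\in L^2$, and $L^2$ holomorphic functions extend across analytic sets. Conversely, forms in $H^0(X,K_X)$ are $L^2$ because $X$ is compact. So $A^2(M)=H^0(X,K_X)$ has dimension $n+1$.

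Take an orthonormal basis $\phi_0,\dots,\phi_n$; it is a fixed linear transform of the basis above. Then
\[
K(p,\bar p)=\sum_j\phi_j\wedge\ov{\phi_j}.
\]
Hence the Bergman metric is $\Phi^*\omega_{FS}$, where $\Phi=[\phi_0:\dots:\phi_n]=A\circ\pi$ for some $A\in PGL(n+1)$. Since $\pi$ is a local biholomorphism off the ramification divisor, which has been removed, the metric is well defined on $M$. It is locally isometric to a Fubini--Study metric, so its holomorphic sectional curvature is a positive constant.

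\textbf{Failure of separation.} Take $p_1=(z,w)$ and $p_2=(z,-w)$ with $w\neq0$. Each $\phi\in A^2(M)$ has the form $\ell(z)\,dz/w$ with $\ell$ affine linear. So $\phi(p_1)=0$ if and only if $\ell(z)=0$, which holds if and only if $\phi(p_2)=0$. Thus $A^2(M)$ does not separate $p_1$ from $p_2$.

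\textbf{Constant scalar curvature.} Take products $M\times\BB^k$, or more generally $M_1\times M_2\times\BB^k$ with several such $M_i$. These are Stein. The Bergman kernel of a product is the product of the kernels, so $A^2$ is the completed tensor product of the factor spaces and the Bergman metric is the product metric. The scalar curvature is therefore the sum of the constant scalar curvatures of the factors: a positive contribution from each $M_i$ and a negative one from $\BB^k$. Choosing dimensions (possibly with several factors) tunes the sum to be positive, zero, or negative. Point separation still fails, since any $\phi$ restricted to the slice through $(p_1,q)$ and $(p_2,q)$ behaves as on $M$.

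\textbf{Main obstacle.} I expect the hardest step to be verifying that $A^2(M)$ is exactly $H^0(X,K_X)$. This requires the removability statement across the divisor $D$, including the part at infinity. It also requires checking that no anti-invariant sections survive, which uses $d=n+2$ so that $\mathcal O(1-d)$ has no sections. A secondary issue is arranging an exactly zero scalar curvature sum, which may require choosing the number and dimensions of factors carefully, for instance balancing one curve factor against a suitable ball.
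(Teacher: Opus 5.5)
Your argument is correct. The example is in fact the paper's: for the Fermat choice $Q=z_0^{2n+4}+\sum_j z_j^{2n+4}$, your $M$ is exactly the paper's $X_0\setminus\{w=0\}$ (for $n=1$, the curve $y^2=1+x^6$ with its branch points removed). Non-separation comes from the same sheet interchange $(z,w)\mapsto(z,-w)$, and the product examples are the same.

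The difference lies in how $A^2(M)$ is identified. The paper stays on the affine hypersurface:
\begin{itemize}
\item it writes holomorphic functions as $a(z)+b(z)w$;
\item it uses $\|(a/w+b)\,dz\|^2=2\int(|a|^2/|P|+|b|^2)\,dV$ to force $b=0$ and $\deg a\le 1$;
\item it proves the integrability Lemma~\ref{222} by hand, via local integrability of $1/|P|$ near $E$ and dyadic annuli at infinity;
\item it uses the diagonal automorphisms $\Phi_\tau$ to produce an explicit orthonormal basis and the explicit metric $i\p\dbar\log(1+\frac AB|z|^2)$.
\end{itemize}

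You instead compactify to the branched double cover of $\PP^n$ and use $L^2$ removability across $D$ to get $A^2(M)=H^0(X,K_X)$. You then read off $H^0(X,K_X)\cong H^0(\PP^n,\mathcal O(1))$ from $K_X=\pi^*\mathcal O(1)$ and $\pi_*\mathcal O_X=\mathcal O\oplus\mathcal O(-d)$. $L^2$-integrability is then automatic from compactness. You also rightly note that orthogonality is not needed, since $\Phi=A\circ\pi$ with $A\in PGL(n+1)$ pulls $\omega_{FS}$ back to a metric isometric to it.

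Your route is more conceptual. It explains the exponent $2n+4$ as exactly what makes $K_X=\pi^*\mathcal O(1)$, it works for any smooth branch hypersurface of that degree, and the substitution $u=1/w$ gives a closed embedding in $\CC^{n+1}$. The paper's route is elementary and self-contained, trading the ramification and projection formulas for explicit estimates.

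The two points you left loose are immediate:
\begin{itemize}
\item The curve has scalar curvature $2$ and $\BB^k$ has $-k$, so one curve factor times $\BB^1,\BB^2,\BB^3$ gives $1,0,-1$.
\item On $M\times\BB^k$, every element of $A^2(M)\otimes A^2(\BB^k)$ has the form $w^{-1}\sum_j\ell_j(z)g_j(\zeta)\,dz\wedge d\zeta$, hence is odd under $w\mapsto -w$. So non-separation persists on the product.
\end{itemize}
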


In \cite{HuLi1}, it was proved that no bounded domain in \(\mathbb{C}^n\) can carry a Bergman metric with positive constant holomorphic sectional curvature. Surprisingly, the authors also discovered in \cite{HuLi1} an unbounded domain in \(\mathbb{C}^2\) whose Bergman metric has positive constant holomorphic sectional curvature.

Motivated by the work of \cite{HuLi1},  additional nice examples of this type  were subsequently constructed in \cite{HK}, \cite{Ya}, and \cite{BGNX}. In particular, the authors of \cite{BGNX} constructed examples of non-pseudoconvex domains carrying Bergman metrics with every permissible positive constant holomorphic sectional curvature.

The examples constructed in this paper cannot be realized as domains in complex Euclidean space, although they are Stein manifolds. Indeed,  we conjectured in \cite{HuLi1} that no pseudoconvex domain in complex Euclidean space admits a well-defined Bergman metric with positive constant holomorphic sectional curvature. The conjecture is supported by a theorem established in \cite{HuLi1}, which asserts that the Bergman space of any complex manifold carrying such a Bergman metric is finite-dimensional, together with the  Wiegerinck conjecture, according to which the Bergman space of a pseudoconvex domain is either zero-dimensional or infinite-dimensional.

Bergman metrics have been extensively studied for almost a century since Bergman’s foundational work, yet the theory remains far from complete, and any new result about these metrics provides important motivation for further study in complex analysis and geometry. Here we mention the work in \cite{Fe}, \cite{Mo1}, \cite{Mo2}, \cite{FW},
as well as many references therein on Bergman metrics and Bergman kernels.  We also mention recent related studies such as in the work of  \cite{GZ}, \cite{L},  \cite{KLS}, \cite{MZ} and references therein. This is especially true beyond the bounded domain case, where much current work in complex analysis and geometry aims to extend to unbounded domains and more general Stein  manifolds.

\bigskip
\noindent\textbf{Acknowledgment.}
\textbf{Acknowledgement.}
The authors are  grateful to N. Mok for many stimulating discussions over the years on Calabi's theory, Bergman geometry, and their  interaction  with other areas of mathematics. 

\section{Preliminaries}

Let $M$ be a complex manifold of complex dimension $n\ge 1$.
Write $\O^{n}(M)$  for the space of  holomorphic $n$-forms on $M$ and define the Bergman space of $M$ to be
\begin{equation}
A^2(M):=\Big\{f\in
\O^n(M):\ \  b_n \int_{\Omega} f\wedge
\overline f<\infty \Big\},\ \  \h{where}\ \  b_n=( i)^n (-1)^{\frac{n^2-n}{2}}=i^{n^2}.
\end{equation}
Notice  that the constant $b_n$ is chosen so that
 for a $L^2$ integrable holomorphic function $g$ in $\CC^n$ and $f=gdz_1\wedge\cdots \wedge dz_n$, $ b_n\int_{\CC^n} f\wedge
\ov f= \int_{\CC^n}|g|^2 d{\rm{vol}}$. Here we write $z_j=x_j+iy_j$ and $d{\rm{vol}}=2^n dx_1\wedge dy_1\wedge \cdots \wedge dx_n\wedge dy_n$.
  
  $A^2(M)$ is a Hilbert space with
  inner product defined by:
\begin{equation}
(f, g)=b_n\int_{M}f\wedge\overline g,\quad
 ~\text{for all}~f, g\in A^2(M).
 \end{equation}
  Assume  $A^2(M)\not = 0$. Let $\{f_j\}_1^N$ be an orthonormal
 basis of $A^2(M)$  with $N\le \infty$ and define the Bergman kernel, which is an $(n,n)$-form (see, e.g., \cite{Ko}), to
 be $K_M=b_n\sum_{j=1}^N f_j\wedge\overline f_j$. In a local holomorphic coordinate
  chart $(U, z)$ on $\Omega$, we have
\begin{equation}
K_M=b_n k_M(z, \overline z) dz_1\wedge\cdots\wedge dz_n\wedge
d\overline z_1\wedge\cdots\wedge d\overline z_n~\text{ in }~ U.
\end{equation}
Assume that  $A^2(M)$ is base-point free in the sense that $K_M$ is nowhere zero on $M$.
We then well define a semi-positive  Hermitian $(1, 1)$-form on $M$ by
$\omega^B_M=\sqrt{-1}\partial\overline\partial \log k_M(z, \overline z).$
We call $\omega^B_M$ the Bergman form of $M$.  If it induces a positive definite metric on $M$, then it is a K\"ahler metric and is  called the Bergman metric of $M$:
$$d^2s:=\sum_{j,k=1}^n\frac{ \partial ^2 }{\partial z_k\overline\partial z_j}\left( \log k_M(z, \overline z)\right) dz_k\otimes d\overline{z_j}.$$

 Bergman space $A^2(M)$ is said to {\it separate points} if for
$p_1,p_2\in M$ with $p_1\not =p_2$, there is a
holomorphic $n$-form $\phi\in A^2(M)$ such that $\phi(p_1)=0$ but $\phi(p_2)\not
=0$.

We say that \(A^2(\mathcal{O})\) separates holomorphic directions at \(p \in M\) if, for every nonzero \(X_p \in T_p^{(1,0)}M\), there exists \(\phi \in A^2(M)\) such that \(\phi(p)=0\) and \(X_p(\phi)(p)\neq 0\), where \(\phi\) is identified with a holomorphic function near \(p\) in a local holomorphic coordinate chart. Kobayashi \cite{Ko} proved that the Bergman metric on \(M\) is well defined near \(p\), meaning that the Bergman form \(\omega_M^B\) is positive definite in a neighborhood of \(p\), if and only if the Bergman space \(A^2(M)\) is base point free at \(p\), that is, there exists \(\phi \in A^2(M)\) with \(\phi(p)\neq 0\), and separates holomorphic directions at \(p\). The following is immeidate from the definition:

\begin{proposition}\label{99-01}
Let \(M\) be a complex manifold. Suppose its Bergman metric exists at a certain point \(p \in M\), that is, the Bergman form \(\omega_M^B\) is positive definite in a neighborhood of \(p\). Then there exists a  complex analytic hypersurface \(E \subset M\) such that the Bergman metric of \(M\) exists on \(M \setminus E\), that is, the Bergman form \(\omega_M^B\) is positive definite on \(M \setminus E\). Here $E$ could be an empty subset.
\end{proposition}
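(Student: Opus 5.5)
The approach is to express the degeneracy of the Bergman form as the vanishing of a single holomorphic object built from finitely many elements of \(A^2(M)\), and then use the identity theorem. Since \(\omega_M^B\) is positive definite near \(p\), Kobayashi's criterion applies at \(p\). Hence there exists \(\phi_0\in A^2(M)\) with \(\phi_0(p)\neq 0\). There also exist \(\phi_1,\dots,\phi_n\in A^2(M)\) vanishing at \(p\) such that the differentials \(d(\phi_j/\phi_0)(p)\), \(j=1,\dots,n\), are linearly independent; these are obtained by choosing \(\phi_j\) to separate the coordinate directions \(\partial/\partial z_j\).

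Consider the meromorphic functions \(h_j=\phi_j/\phi_0\) on \(M\), which are holomorphic off the zero divisor \(Z(\phi_0)\). Define the holomorphic \(n\)-form
\[
\Phi=\phi_0^{\,n}\, dh_1\wedge\cdots\wedge dh_n .
\]
This is a holomorphic section of \((K_M)^{\otimes (n+1)}\) on all of \(M\). In a chart, write \(\phi_j=g_j\,dz\); then \(\Phi\) corresponds to \(g_0^{n}\det\bigl(\partial_k(g_j/g_0)\bigr)\), and \(g_0^{n+1}\) times this Jacobian is a polynomial in the \(g_j\) and their first derivatives. So \(\Phi\) is holomorphic, equal to \(\det\) of the \((n+1)\times(n+1)\) matrix with rows \((g_j,\partial_1 g_j,\dots,\partial_n g_j)\) up to a power of \(g_0\). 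To avoid the denominator, take instead the Wronskian-type section
\[
W=\det\begin{pmatrix} g_j & \partial_1 g_j & \cdots & \partial_n g_j\end{pmatrix}_{j=0}^n .
\]
A change of coordinates multiplies \(W\) by a power of the Jacobian determinant, so \(W\) is a global holomorphic section of a power of \(K_M\). Moreover \(W(p)\neq0\) by the choice of the \(\phi_j\). Set \(E=\{W=0\}\), a complex analytic hypersurface, or empty; it is proper since \(W\not\equiv0\).

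At any \(q\notin E\), the nonvanishing of \(W(q)\) forces some \(g_j(q)\neq 0\), so \(A^2(M)\) is base point free at \(q\). For the direction separation, subtract multiples of the base-point-free form: take \(\psi_j=\phi_j-\frac{g_j(q)}{g_{i}(q)}\phi_i\), where \(g_i(q)\neq0\). The row operations preserve \(W(q)\neq 0\), so the first derivatives of these \(\psi_j\) at \(q\) span \(T_q^{*(1,0)}\). Kobayashi's criterion then gives positivity of \(\omega_M^B\) at \(q\), and hence on a neighborhood of \(q\) since \(M\setminus E\) is open.

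The main point needing care is checking the transformation law of \(W\), so that it is a global section, together with the linear-algebra step that \(W(q)\neq0\) yields both base-point freeness and direction separation at \(q\). Both are routine once written in coordinates. If \(M\) is disconnected, the argument gives the conclusion on the connected component containing \(p\), which is presumably what is intended.
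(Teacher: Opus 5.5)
Your proof is correct, and its skeleton is the paper's. Both choose $\phi_0$ with $\phi_0(p)\neq 0$ and $\phi_1,\dots,\phi_n$ vanishing at $p$ with independent differentials, take $E$ to be the zero set of a determinant built from them, and finish with the identity theorem and Kobayashi's criterion. You differ in which determinant you use, and yours is the sounder choice.

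The paper takes $E=E_0\cup E_1$, where $E_0=\{\varphi_0=0\}$ and $E_1$ is the zero set, chart by chart, of the $n\times n$ Jacobian $\det(\partial\varphi_j/\partial z_k)_{1\le j,k\le n}$ of the coefficient functions. Under a change of coordinates the coefficients become $\varphi_j/J$. Wherever the $\varphi_j$ do not all vanish, the derivative falls partly on $J$. So the zero set of this Jacobian is not coordinate-invariant, and $E_1$ depends on the charts chosen.

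Moreover, $\det(\partial_k\varphi_j)(q)\neq 0$ does not by itself give forms vanishing at $q$ with independent differentials. The relevant forms are $\varphi_j-\frac{\varphi_j(q)}{\varphi_0(q)}\varphi_0$, and their differentials are $\partial_k\varphi_j(q)-\frac{\varphi_j(q)}{\varphi_0(q)}\partial_k\varphi_0(q)$. A concrete case: on the compact genus-two curve $y^2=1+x^6$ of the last section, take $\varphi_0,\varphi_1$ to be the normalized $dx/y$ and $x\,dx/y$. At a Weierstrass point, the chart $y=t+t^2$ makes the paper's Jacobian nonzero, and $\varphi_0$ is nonzero there too, yet the Bergman form degenerates at that point.

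Your $W=g_0^{\,n+1}\det\bigl(\partial_k(g_j/g_0)\bigr)$ is exactly the invariant replacement. It transforms by $J^{-(n+2)}$, so it is a global section of $K_M^{\otimes(n+2)}$. Your row reduction shows that $W(q)\neq 0$ yields both base-point freeness and direction separation at $q$. It also makes $E_0$ unnecessary, since positivity holds off $\{W=0\}$ even where $\phi_0$ vanishes.

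Two small repairs are needed in your write-up.

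First, choosing for each $j$ a form that separates $\partial/\partial z_j$ does not make the differentials independent; they could all be multiples of one covector. Argue instead as follows. After trivializing $K_M$ at $p$, the set $\{d\phi(p):\phi\in A^2(M),\ \phi(p)=0\}$ is a linear subspace of $T^{*(1,0)}_pM$. A nonzero vector annihilating it would contradict direction separation, so it is the whole space. You can then pick $\phi_1,\dots,\phi_n$ whose differentials form a basis. This is what the paper's $\ell^2$ argument with the stabilizing subspaces $V_N$ accomplishes.

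Second, $\Phi=\phi_0^{n}\,dh_1\wedge\cdots\wedge dh_n$ equals $W/\phi_0$. It is only meromorphic along $\{\phi_0=0\}$, so drop the claim that it is holomorphic; this is harmless since you pass to $W$.

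Your connectedness caveat is right: the statement tacitly assumes $M$ connected.
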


\begin{remark}\label{rem:2.2}
A subset $E \subset M$ is called a \emph{complex analytic hypersurface} of $M$ if $E$ is a closed (possibly singular) complex analytic subvariety of $M$ of pure codimension one at every point.
We also note that, by the results of \cite{GHH17,HJL25}, if a pseudoconvex domain has a boundary point that is a local peak point for holomorphic functions continuous up to the boundary in a neighborhood of that point, then its Bergman metric is well defined in a neighborhood close to  this boundary point. Consequently, by Proposition~\ref{99-01}, after possibly removing a complex analytic hypersurface, the Bergman metric is well defined on the remaining domain. In particular, this applies whenever the pseudoconvex domain has a $C^2$-smooth strongly pseudoconvex boundary point.

By a classical result of
Docquier and  Grauert \cite{DG}, when $M$ is Stein, $M\sm E$  is also Stein.
\end{remark}

\begin{proof}[Proof of Proposition~\ref{99-01}]
Since the Bergman metric is well defined at \(P\), the Bergman space \(A^2(M)\) is base-point free and separates holomorphic tangent directions at \(P\).

Choose a local holomorphic coordinate chart \((U,z)\) centered at \(P\) such that \(z(P)=0\). We identify any holomorphic \(n\)-form
 $
\varphi\,dz_1\wedge\cdots\wedge dz_n$
with its coefficient function \(\varphi\).

First, choose
$\varphi_0\in A^2(M)$ 
such that
$\varphi_0(P)\neq 0.$ 
Now let
\[
\{\varphi_0,\varphi_1,\varphi_2,\ldots\}
\]
be an orthonormal basis of \(A^2(M)\) with
\[
\varphi_j(P)=0,\qquad j\ge 1.
\]

For each \(j=1,\ldots,n\), define
\[
\beta_j
=
\left(
\frac{\partial \varphi_1}{\partial z_j}(0),
\frac{\partial \varphi_2}{\partial z_j}(0),
\ldots
\right)
\in \ell^2.
\]

We claim that
\[
\{\beta_1,\ldots,\beta_n\}
\]
are linearly independent.

Indeed, suppose that there exist constants
\[
\lambda_1,\ldots,\lambda_n,
\]
not all zero, such that
\[
\sum_{j=1}^n \lambda_j \beta_j=0.
\]
Then
\[
\sum_{j=1}^n \lambda_j \frac{\partial \varphi_\ell}{\partial z_j}(0)=0,
\qquad \forall\, \ell\ge 1.
\]
Hence every element of \(A^2(M)\) vanishing at \(P\) has vanishing derivative in the  non-zero holomorphic direction
\[
X=\sum_{j=1}^n \lambda_j \frac{\partial}{\partial z_j}|_0
\]
at \(P\), contradicting the assumption that \(A^2(M)\) separates holomorphic directions. Therefore,
\[
\beta_1,\ldots,\beta_n
\]
are linearly independent.

For each positive integer \(N\), write
\[ \beta_j=(\beta_{j1},\cdots, \beta_{jm},\cdots)\ \hbox{and}\ \ 
\beta_j^{(N)}=(\beta_{j1},\ldots,\beta_{jN}),
\]
and define
\[
V_N
=
\left\{
c=(c_1,\ldots,c_n)\in \mathbb C^n:
\sum_{j=1}^n c_j \beta_j^{(N)}=0
\right\}.
\]
Then
\[
V_1\supset V_2\supset \cdots.
\]
Since each \(V_N\) is a subspace of the finite-dimensional space \(\mathbb C^n\), there exists \(N_0\) such that
$ 
V_N=V_{N_0},\qquad N\ge N_0.
$ 

If
\[
c=(c_1,\ldots,c_n)\in V_{N_0},
\]
then \(c\in V_N\) for every \(N\ge N_0\). Consequently,
$\sum_{j=1}^n c_j \beta_j=0.$ 
Since the vectors \(\beta_1,\ldots,\beta_n\) are linearly independent, it follows that
$  
V_{N_0}=\{0\}.
$
Hence, after possibly renumbering the basis, we may assume that
\[
\det
\left(
\frac{\partial \varphi_j}{\partial z_k}(0)
\right)_{1\le j,k\le n}
\neq 0.
\]

Now define
\[
E_0=\{\varphi_0=0\}.
\]
For each \(q\in M\), choose a holomorphic coordinate system \((U_q,z)\) with \(z=(z_1,\ldots,z_n)\) on a neighborhood \(U_q\) of \(q\), and set
\[
E_1\cap U_q
=
\left\{
z\in U_q:
\det
\left(
\frac{\partial \varphi_j}{\partial z_k}(z)
\right)_{1\le j,k\le n}
=0
\right\}.
\]
Then \(E_1\) is a well-defined  complex analytic hypersurface of \(M\), which might be empty.

Therefore
$ 
E=E_0\cup E_1
$ 
is either empty or a complex analytic hypersurface  of \(M\). Moreover, the Bergman form is positive definite on
$ 
M\setminus E,$ 
or equivalently, the Bergman metric is well defined on
$M\setminus E.$ 

\end{proof}

For the Bergman metric to be well-defined, it is not necessary for $A^2(M)$ to separate points; however, this property holds automatically in many cases, such as when $M$ is a bounded domain in $\mathbb{C}^n$. Indeed, in this paper, we investigate how curvature conditions influence the point separation property of the Bergman metric. This study is motivated by the authors’ earlier work \cite{HuLi1} on complex manifolds with constant holomorphic sectional curvature for their Bergman metric, in which point separation was assumed.





\bigskip
We define the infinite dimensional projective space, denoted by $\PP^\infty$,  as
$$\PP^\infty:=\ell^2\sm \{0\}/\sim, \ \h{where }\ell^2:=\{(x_1,\cdots,x_m,\cdots): \sum_{j=1}^\infty |x_j|^2<\infty\}.$$  Here, for any given  $x,y\in \ell^2\sm \{0\}$,
$x\sim y $ if and only if $x=ky$ for a
certain $k\in \CC\sm\{0\}.$ For $x=(x_1,\cdots,x_n,\cdots)\in
\ell^2\sm \{0\}$, we write $[x_1,\cdots,x_n,\cdots]$ for its
equivalence class in $\PP^\infty$, called the homogeneous coordinate
of the equivalence class of $x$. $\PP^m$ is naturally identified as
a closed subspace of $\PP^\infty$ by adding zeros to homogeneous
coordinates of  $\PP^m$.

A map ${\cal F}: M\ra \PP^\infty$  is called a holomorphic map from
$M$ into $\PP^\infty$ if there is a holomorphic representation $[F]$
of ${\cal F}$  near each $p\in M$. Namely, for each $p\in M$, there
is a small neighborhood $U_p$ of $p$ in $M$ such that ${\cal F}=[F]$
with $F=(f_1,\cdots, f_m, \cdots)$, where each $f_j$ with $j\in \NN$
is a holomorphic function in $U_p$ and for each $q\in U_p$, there is
a $k$, which may depend on $q$, such that $f_{k}(q)\not = 0$.
Moreover, $\sum_{j=1}^\infty |f_j(z)|^2$ is required  to  converge
uniformly on compact subsets of $U_p$.
We   denote by $$X_{\F}=[X_F]\subset
\PP^n$$  with $X_F$ being the closed linear subspace of $\ell^2$  generated by the linear span of  $F(M)$ in $\ell^2$. 
 $X_{\F}$ is independent of the choice of local holomorphic
representations of $\F$.

The formal Fubini–Study metric on $\PP^\infty$, denoted by  $\o_{st}$, with homogeneous coordinates $$[z_1, \dots, z_n, \dots],$$ is formally defined by

\begin{equation}\label{(1.1)}
\omega_{st}=i\p \dbar \log  \Big(\sum_{j=1}^\infty |z_j|^2\Big).
\end{equation}
Let \begin{equation}\label{(1.2)} \F: \ (M, \omega) \to (
\PP^\infty, \omega_{st})
\end{equation}
be a holomorphic map, where $\o$ is a  K\"ahler metric over $M$. If
for any  local holomorphic representation
$\F=[f_1,\cdots,f_n,\cdots]$ over $U\subset M$, it holds that
\begin{equation}\label{(1.3)}
\o=\F^*(\omega_{st}):=i\p \dbar \log \Big(\sum_{j=1}^\infty
|f_j(z)|^2\Big)\quad  \h{over } U,
\end{equation}
we call  $\F$  a local holomorphic isometric embedding from $M$ into
$\PP^\infty$.



\section{Negative constant holomorphic sectional curvature}
Let $M$ be a complex manifold and assume that $A^2(M)$ is base-point free.  For an
orthonormal basis $\{\phi_j\}_{j=1}^{N}$, we then have a well-defined
holomorphic map $$\B:=[\phi_1,\cdots,\phi_N,0,\cdots],\  N<\infty;\ \ \h{or } \B:=[\phi_1,\cdots,\phi_m,\cdots]\ \h{for }N=\infty$$ from $M$ into
$\PP^\infty$. We also write $B=(\phi_1,\phi_2,\cdots)$. Thus $\B=[B]$. $\B$ is called a Bergman-Bochner or a cannonical  map and is a local holomorphic isometric embedding from $M$ into $\PP^\infty$.
 Then $A^2(M)$ separates points if and only if any Bergman-Bochner
 map  ${\cal B}: M \to \PP^\infty$ is one-to-one. 
 Suppose that the holomorphic sectional curvature of $\o_M$ is a
negative constant on $M$. Then for  $p_0\in M$,  by  Theorem 6 in [Bo], there is a biholomorphic map $F$
from a small neighborhood  $U$ of $p_0\in M$ to a subdomain in the unit ball $\BB^n\subset \CC^n$ such that
$F^*(\ld\o_{\BB^n})=\o_M$ for a certain positive constant $\ld$. 

Now one can construct as in \cite{HuLi1} 
 a one-to-one  holomorphic isometric embedding $\mathcal{T}=[1,
P_1, \cdots, P_j, \cdots]$ from $(\BB^n, \mu\omega_{\BB^n})$ into
$\mathbb{P}^{\infty},$ where $P_j's$ are holomorphic monomials in $z\in \CC^n$. Moreover  $P_j=a_jz_j$ with  $a_j\in \CC \not =0$ for $j=1,\cdots,n$. We similarly write   $T=(1,
P_1, \cdots, P_j, \cdots).$   
Then ${\mathcal T}\circ F: U\to \PP^\infty$ is a one to one
holomorphic isometric embedding.
By the Calabi rigidity theorem
(\cite{Ca} or \cite{HuLi1}), there is a one to one isometric
isomorphism ${\cal L}: X_\B\ra X_{\mathcal T}$ such that
\begin{equation}\label{X1}
    {\cal L}\circ {\mathcal B}(z)={\mathcal T}\circ F(z),\quad z\in U.
\end{equation}
 Notice that ${\cal L}=[L]$ is induced by a linear isometric isomorphism 
 ${ T}$ from the closed linear subspace $X_{B}\subset \ell^2$ to  $X_{ T}\subset\ell^2$. By the
Calabi extension theorem (\cite{Ca} or \cite{HuLi2}), ${\mathcal
T}\circ F$ extends along any curve in $M$ initiated from $p_0\in U$ to a local holomorphic isometric embedding. Since each $z_j$ is a
component in ${\mathcal T}$ up to a coefficient, we see that $F$
extends holomorphically and isometrically along any curve in $M$
starting at $p_0$ to $\wt{F}$.  $\wt{F}$ a priori might be
multi-valued. 
Since the K\"ahler  metric $\ld\o_{\BB^n}$ is
complete, $\wt{F}$ has image in $\BB^n$ and thus (\ref{X1}) shows that $\wt{F}$, still denoted by $F$ in the following, is globally defined in $M$, which is a local biholomorphism. 

  
Note that  (\ref{X1}) holds over $M$. Hence for any $p_1,p_2\in M$, $\mathcal B(p_1)=\mathcal B(p_2)$ if and only if $F(p_1)=F(p_2)$. 

\begin{proposition}\label{1-3} 
Let  $M$ be a Stein manifold of complex dimension $n\ge 1$.  
  Assume further that there is a smooth strictly plurisubharmonic function  $\sigma$ on $M$ 
  that is bounded from above.  Then $A^2(M)$ separates points of $M$.
\end{proposition}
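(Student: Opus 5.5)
Given $p_1\neq p_2$ in $M$, I will construct $\phi\in A^2(M)$ with $\phi(p_1)=0$ and $\phi(p_2)\neq 0$ by solving a $\bar\partial$-equation with Hörmander's $L^2$ estimates for $(n,1)$-forms on the Stein manifold $M$. The weight will combine a bounded strictly plurisubharmonic function, which supplies positivity, with logarithmic poles, which force the vanishing and non-vanishing conditions. Since $\sigma$ is bounded above, we may subtract a constant and assume $\sigma<0$. Working with $(n,0)$-forms has the advantage that no metric on $M$ enters the $L^2$ norm $b_n\int_M u\wedge\ov u$. The curvature hypothesis is only needed as $i\partial\dbar\varphi>0$ in the sense of currents, together with a lower bound $i\partial\dbar\varphi\ge c\,\omega$ for some Kähler metric $\omega$ on the support of the $\bar\partial$-data.

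The construction proceeds in four steps.

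\textbf{Step 1 (local data).} Choose disjoint coordinate balls $U_1\ni p_1$ and $U_2\ni p_2$, with coordinates $z$ centered at $p_1$ and $w$ centered at $p_2$. Take cutoffs $\chi_1,\chi_2$ equal to $1$ near $p_1,p_2$ and supported in $U_1,U_2$. Set $v=\chi_2\,dw_1\wedge\cdots\wedge dw_n$. This is a smooth $(n,0)$-form with compact support, holomorphic near $p_2$ and vanishing near $p_1$.

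\textbf{Step 2 (weight).} Define
\[
\varphi=A\sigma+2n\chi_1\log|z|+2n\chi_2\log|w|.
\]
The terms $\chi_j\log|\cdot|$ are plurisubharmonic where $\chi_j\equiv1$. On the annular regions where $\dbar\chi_j\ne0$ they are smooth with bounded complex Hessian. Choosing $A$ large, the term $Ai\partial\dbar\sigma$ absorbs these errors on the compact set $\mathrm{supp}\,\dbar\chi_1\cup\mathrm{supp}\,\dbar\chi_2$, so that $i\partial\dbar\varphi\ge c\,\omega$ there and $\varphi$ is plurisubharmonic everywhere. Because $\sigma<0$ and $\log|z|,\log|w|$ are bounded above on the cutoff supports, $\varphi$ is bounded above on $M$.

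\textbf{Step 3 ($\bar\partial$-solve).} Put $f=\dbar v$. It is supported in the annulus around $p_2$, away from the poles, so $\int|f|^2_{i\partial\dbar\varphi}e^{-\varphi}<\infty$. Hörmander's theorem on the Stein manifold $M$, applied with the singular weight (approximated by smooth strictly psh weights, or using Demailly's version), gives an $(n,0)$-form $u$ with $\dbar u=f$ and
\[
\int_M |u|^2 e^{-\varphi}\le \int_M |f|^2_{i\partial\dbar\varphi}e^{-\varphi}<\infty .
\]
Since $e^{-2n\log|z|}=|z|^{-2n}$ is non-integrable at $p_1$, and $u$ is holomorphic near $p_1$ (because $f=0$ there), we get $u(p_1)=0$. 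The same argument gives $u(p_2)=0$. Because $\varphi$ is bounded above, $e^{-\varphi}\ge c>0$, so $\int_M|u|^2<\infty$.

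\textbf{Step 4 (conclusion).} Set $\phi=v-u$. Then $\phi$ is holomorphic, lies in $A^2(M)$ since $v$ has compact support, satisfies $\phi(p_1)=0-0=0$, and satisfies $\phi(p_2)=dw_1\wedge\cdots\wedge dw_n-0\neq 0$.

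The main obstacle is Step 2: making the weight with logarithmic poles plurisubharmonic globally. Strict plurisubharmonicity of $\sigma$ on the compact cutoff annuli is exactly what permits absorbing the negative Hessian contributions of $\chi_j\log|\cdot|$. I would first try $\chi_j$ depending only on $|z|$ or $|w|$, to get explicit Hessian bounds. A secondary technical point is justifying Hörmander's estimate for the non-smooth weight, either by regularizing $\log|z|$ as $\tfrac12\log(|z|^2+\epsilon)$ and taking a weak limit as $\epsilon\to0$, or by citing the standard singular-weight version on Stein manifolds.
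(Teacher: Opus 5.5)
Your proposal is correct and follows essentially the same route as the paper: the weight is $m\sigma+n\chi\log|\cdot|^2$ at both points, and the datum is $\dbar$ of a cut-off holomorphic $n$-form at one point. As in the paper, non-integrability forces the solution to vanish at both points, and the upper bound on $\sigma$ turns the weighted estimate into Bergman integrability. The paper simply derives the Hörmander estimate by hand from the Bochner--Kodaira--Morrey--Kohn identity on sublevel sets $\Omega_{r_j}$ with the regularized weight $\log(|z|^2+\epsilon)$ and then passes to the limit in $j$ and $\epsilon$; it also puts the non-vanishing at $p_1$ instead of $p_2$.
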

The above proposition together with the result in \cite{HuLi1} gives the following:
\begin{theorem}\label{mainthm3}
Let $M$ be a Stein manifold of complex dimension $n$.  If  the  Bergman metric of  $M$ is well-defined and has a   negative   constant  holomorphic sectional curvature, denoted by $c$,  then $M$ is biholomorphic to $\BB^n\sm E \subset\CC^n$ where $\BB^n$ is the unit ball and $E$ is a pluripolar set of $\CC^n$. Moreover, 
 $c=c_{\BB^n}=-2/(n+1)$.
\end{theorem}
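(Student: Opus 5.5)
The plan is to derive Theorem~\ref{mainthm3} by first checking the point-separation hypothesis of \cite{HuLi1} through Proposition~\ref{1-3}, and then quoting the classification proved there. Proposition~\ref{1-3} needs a smooth strictly plurisubharmonic function on $M$ that is bounded above, and the construction just before the statement supplies one. Calabi rigidity and the Calabi extension theorem give a globally defined local biholomorphism $F: M\to \BB^n$ with $F^*(\ld\o_{\BB^n})=\o_M$, so $F$ has nondegenerate Jacobian everywhere. I would take
\[
\sigma(p)=|F(p)|^2=\sum_{j=1}^n |F_j(p)|^2 .
\]
It is smooth and satisfies $\sigma<1$ on $M$, because $F(M)\subset\BB^n$. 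It is also strictly plurisubharmonic: $i\p\dbar\sigma=F^*(i\p\dbar|z|^2)$ is the pullback of a positive $(1,1)$-form by a local biholomorphism, hence positive definite.

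The one point needing care is that $F$ really is single-valued with image in $\BB^n$. This rests on the discussion before Proposition~\ref{1-3}. Completeness of $\ld\o_{\BB^n}$ keeps analytic continuation along curves inside the ball. The identity (\ref{X1}), $\mathcal L\circ\B=\mathcal T\circ F$ with $\mathcal B$ single-valued on $M$, recovers each coordinate $z_j$ linearly (up to the nonzero constants $a_j$) from $\mathcal T\circ F$. Hence continuations along different curves agree, and $F$ is global. I take this as established in the text.

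By Proposition~\ref{1-3}, $A^2(M)$ separates points. Now all hypotheses of the result of \cite{HuLi1} hold: $M$ is Stein, its Bergman metric is well defined with negative constant holomorphic sectional curvature, and its Bergman space separates points. That result gives a biholomorphism of $M$ onto $\BB^n\sm E$ with $E$ pluripolar.

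The curvature value then follows by a standard argument. A pluripolar set is removable for $L^2$ holomorphic functions, so $A^2(\BB^n\sm E)=A^2(\BB^n)$ by restriction, and the Bergman kernels coincide. The Bergman metric of $M$ is therefore the pullback of that of $\BB^n$, whose holomorphic sectional curvature is $-2/(n+1)$. Thus $c=c_{\BB^n}$.

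The substantive work is in Proposition~\ref{1-3} itself, proved later via H\"ormander's $L^2$ estimates. For the theorem, the main step is only the verification above that $|F|^2$ is an admissible bounded strictly plurisubharmonic function.
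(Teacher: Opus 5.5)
Your proposal is correct and takes the same route as the paper. The paper also sets $\sigma=|F|^2$, where $F\colon M\to\BB^n$ is the globally defined local biholomorphism obtained from Calabi rigidity and extension, applies Proposition~\ref{1-3} to get point separation, and then invokes \cite{HuLi1}; your extra derivation of $c=-2/(n+1)$ from $L^2$-removability of the relatively closed pluripolar set is a standard supplement that the paper leaves to the citation of \cite{HuLi1}.
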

Our proof is based on Hörmander’s \(L^2\) method for solving \(\bar\partial\)-equations on Stein manifolds. Since we could not find a suitable reference leading to  the proof of Proposition~\ref{1-3}, we include a detailed argument here.

We first recall the classical Bochner--Kodaira--Morrey--Kohn identity on a smoothly compactly supported domain in a K\"ahler manifold.

\medskip

Assume that  $(M,\omega)$ is  a K\"ahler  manifold. Let $\O\subset \subset M$ be a  smoothly bounded relatively compact domain with   $\rho\in C^\infty (\ov{\O})$ being a defining function. Namely, $\rho|_{\O}<0$ and $d\rho|_{\p \O}\not =0$. By multiplying a positive smooth function over $\ov \O$ to $\rho$, if needed, we  assume that $|d\rho|\equiv 1$ in a small neighborhood $\p \O$ in $\ov{\O}$.   Let $E$ be a Hermitian holomorphic line bundle over $M$. Write $\ov\p^{E}$ for the $\ov\p$-operator sending an $E$-valued $(0,q)$-form  $\a\in \Lambda^{0,q}(\Omega, E)$ to  an $E$-valued $(0,q+1)$-form:  $\ov\p\a\in \Lambda^{0,q+1}(\Omega, E)$ .  Still write  $\ov\p^{E}$ for its closed extension  from  $L^2_{(0,q)}(\Omega,E)$  into $L^2_{(0,q+1)}(\Omega,E)$, which is then a  densely defined closed operator. Write $\ov\p^{E *}$ for the Hilbert adjoint operator of $\ov\p^{E}$.

Let $\phi\in \L^{(0,1)}(\ov\O,E)\cap \h{Dom}({\ov\p}^{E*})$. Then we have the following special case of the  classical Bochner-Kodaira-Morrey-Kohn formula (see, for instance,  [pp 119, Siu] or  [Theorem 1.4.21, \cite{MaMa}]):

\begin{equation}\label{BKMHK} 
\|\ov\p^{E}\phi\|_{\O,h}^2+\|\ov\p^{E *}\phi\|_{\O,h}^2=\|\ov{\nabla}^E\phi\|_{\O,h}^2+ (\h{Ric} (\phi), \phi)_{\O,h} + (K(\phi), \phi)_{\O,h}+(\h{Levi}_\rho(\phi), \phi)|_{\p\O, h}.
\end{equation}
We explain the notations used above as follows:
Let $(U,(z_1,\cdots,z_n))$ be a holomorphic coordinate patch of $M$, over which $E$ is also trivialized.  Write $e$ for a basis of $E$ over $U$. Write $g=\sum_{\a \b}g_{a\ov\b}dz_\a \otimes d \ov{z_\b}$ for the K\"ahler metric tensor over $U$ with the associated K\"ahler form $\omega=i \sum_{\a \b}g_{a\ov\b}dz_\a \wedge d \ov{z_\b}$. Write   $g=\h{det}(g_{k\ov l})$ and  $e^{-h}=<e,e>$ with respect to the Hermitan product in $E$. Over $U$ the Ricci tensor  $R_{\ov v}^{\ov \a}$ of $M$ and the curvature tensor $K_{\ov v}^{\ov \a}$ of $E$ are given by
$$R_{\l \ov v}=- \p_{\l} {\p}_{\ov v}\log (g),
\ \ \  K_{\l \ov v}=- \p_{\l} {\p}_{\ov v}\log (e^{-h}).
$$
The Levi form of $\rho$, denoted by  $\h{Levi}_\rho:=\p_\a\ov{\p_\b}\rho dz_\a\otimes d\ov{z_\b}$, is the  complex Hessian of $\rho$ but restricted to the complex tangent bundle of type $(1,0)$ along $\p \O$. $\nabla^E$ is the unique Hermitian connection on $\L^{(0,q)}(E)$ for $q\ge 0$ 
associated with  the K\"ahler metric of $M$ and
the Hermitian metric of $E$.

Write $\phi=\phi_{\ov j}d\ov{z_j}  \otimes  e$. 
Then
  \begin{equation}\label{BKMK-02}
\begin{split}
 &(\h{Ricc} (\phi), \phi)_{\O,h} =
 \int_{\O} R_{ \a \ov\b}{\phi^{\a}}\ov{\phi^{\b}} e^{-h}\frac{\omega^n}{ n!}, \\ 
 & (K(\phi), \phi)_{\O,h}=\int_{\O}
 K_{ \a \ov\b}{\phi^{\a}}\ov{\phi^{\b}}
 e^{-h}\frac{\omega^n}{n!}, \\
 &(\h{Levi}_\rho(\phi), \phi)|_{\p \O,h}=\int_{\p \O} g^{s\ov t}\p_s\p_{\ov{j}}(\rho){\phi_{\ov t}}\ov{\phi^{ j}} e^{-h} \h{d} \h{vol}(\p \O)
\\ &
\|\overline{\nabla}^E\phi\|_{\Omega,h}^{2}
= \int_{\Omega}
|\overline{\nabla}^E\phi|^{2}\ \frac{\omega^n}{ n!}
= \int_{\Omega} g^{k\overline{\ell}}\langle{\nabla}_{\overline \ell}^E\phi, {\nabla}_{\overline k}^E\phi\rangle \frac{\omega^n}{ n!}.
 \end{split}
 \end{equation}
Here  $\phi^t=g^{\ov{\l}t}\phi_{\ov \l}$,  $\h{d} \h{vol}(\p \O)$ is the induced volume form  on $\p \O$. Namely,  $\h{d} \h{vol}(\p \O)=i_{L}(\frac{\omega^n}{n!}) $, the contraction of $\frac{\omega^n}{ n!}$ with the unit outward normal vector of $\p \O$.

\begin{proof} [Proof of Proposition \ref{1-3}]
Let $\rho$ be a strictly smooth p.s.h exhausting  function of $M$ and choose a increasing sequence $\{r_j\}_{j=1}^{\infty}\subset \RR$ such that each $r_j$ is a regular value of $\rho$ and $p_1,p_2\in \O_{r_j}: =\{p\in M:\ \rho(z)<r_j\}$ for each $j\ge 1$.
Since $M$ is Stein, it can be properly embedded into ${\mathbb C}^{2n+1}$. Pull back the restricted Euclidean metric by this embedding, we get  a complete real analytic  K\"ahler  metric $d^s$ with $\o$ on $M$ as the associated positive $(1,1)$ metric tensor.

Let
$ 
d^2s=g_{i\bar j}\,dz^i\otimes d\bar z^j$
be a Kähler metric in a local holomorphic coordinate chart $(U,z)$.
We have the naturally  induced Hermitian metric $h_{K_M}$ on the canonical line bundle
$K_M=\Lambda^{n,0}T^*M$, which is given by  
\[
\left<dz^1\wedge\cdots\wedge dz^n,
\,dz^1\wedge\cdots\wedge dz^n\right>_{h_{K_M}}
=\frac{1}{\det(g_{i\bar j})}.
\]
Equivalently,
\[
|a\,dz^1\wedge\cdots\wedge dz^n|_{h_{K_M}}^2
=\frac{|a|^2}{\det(g_{i\bar j})}.
\]

 Let $\kappa=\log\det(g_{i\bar j})$  be the smooth function on $U$ such that 
 \[<dz_1\wedge \cdots \wedge dz_n,  dz_1\wedge \cdots \wedge dz_n>_{h_{K_M}}:=<dz_1\wedge \cdots \wedge dz_n,  dz_1\wedge \cdots \wedge dz_n>_\kappa=\frac{1}{\det(g_{i\bar j})}=e^{-\kappa}.\]

We set $E=K_M$ to be the canonical line  bundle equipped with the just mentioned  Hermitan metric.

Notice that for a holomorphic section $s$ of $E_j=E|_{\O_{r_j}}$  writing in the  holomorphic coordinate chart  mentioned above: $s=adz_1\wedge \cdots \wedge dz_n$, then we see that
$$(s,s)_{\kappa}:=\int_{\O_{r_j}}<s,s>\frac{\o^n}{n!}=i^{n^2}\int_{{\O_{r_j}}}s\wedge\ov{s}.$$

Let $p_1\not =p_2\in M$.  There are   certain neighborhoods $U(p_j)$ of $p_j$  for $j=1,2$  such that $\Phi_j$ maps biholomorphically  $\ov{U(p_j)}$  to the  closed Euclidean ball $\ov{B(0,1)}$ centered at the origin  with radius $1$ for $j=1,2$, respectively. Moreover, $\ov{U(p_1)}\cap \ov{U(p_2)}=\emptyset$. Write its inverse to be $G_{p_j}:  B(0,1)\ra U(p_j)$ for $j=1,2$, respectively.
 Let $\chi_0$ be a non-negative function on $M$ with support in the unit of two coordinate balls $G_{p_1}(B(0, \frac{1}{2}))\cup G_{p_2}(B(0, \frac{1}{2}))$
 and 
$$\chi_0(p)\equiv 1,  \quad\hbox{ over  } 
G_{p_1}(B(0, \frac{1}{4}))\cup G_{p_2}(B(0, \frac{1}{4})).
$$
For $0<\e<<1$, define
$$h^*_{\e}(p)= n \chi_0(p)\log \Big(  |\Phi_1(p)-\Phi_1(p_1)|^2+\e\Big)+n \chi_0(p)\log \Big(  |\Phi_2(p)-\Phi_2(p_2)|^2+\e\Big)+m \sigma(p).
$$
Then when $m$ is sufficiently large,  that  depends only   on $\chi_0$,
 $h_{\e}^*(z)$ is  smoothly strictly plurisubharmonic in $M$ uniformly  bounded from above with repsect to $\epsilon$.
 
 We now identify  $p\in  U(p_j)$ with its holomorphic coordinate $z=\Phi_j(p)$ for $j=1,2$. Then by making $m$ sufficiently large, we  assume that there is a constant $C_1>0$, independent of $\e$ but only of  $\chi_0$,  such that for $\xi\in\CC^n$,
$$\sum_{i,j=1}^n\frac{\p^2 h^*_\e}{\p z_i\p \ov{z_j}} \xi_i\ov{\xi_j}\ge C_1 |\xi|^2 \  \h{over } U(p_1)\cup U(p_2).$$

 

We let $A$ be the multiplication operator by the characteristic function 
$$\chi_{\{G_{p_1}(B(0, \frac{1}{2}))\cup G_{p_2}(B(0, \frac{1}{2}))\}},$$  namely,  for each $\phi \in L^2_{(0,1)}(\O_{r_j}, E)$, $$A(\phi)=\chi_{\{G_{p_1}(B(0, \frac{1}{2}))\cup G_{p_2}(B(0, \frac{1}{2}))\}} \phi$$ which is $\phi$ in $G_{p_1}(B(0, \frac{1}{2}))\cup G_{p_2}(B(0, \frac{1}{2}))$ and $0$ otherwise.
  Notice that  $\p \O_{r_j}$ is  strongly pseudoconvex and thus the Levi form is positive definite. Now we equip  $K_M$ with the metric  $$e^{-(h_\epsilon ^*+\kappa)}:=e^{-h_\epsilon},\ \ i.e.,\ 
|a\,dz^1\wedge\cdots\wedge dz^n|_{h_\epsilon}^2
=|a|^2 e^{-(h_\epsilon ^*+\kappa)}.  $$

Notice that in a local chart $(U,z=(z_1,\ldots,z_n))$,

\[
R_{\lambda\bar{\nu}}
=
-\partial_\lambda\partial_{\bar{\nu}}
\log\det(g_{j\bar{k}}),\ \ 
K_{\lambda\bar{\nu}}
=
-\partial_\lambda\partial_{\bar{\nu}}
\log\!\left(e^{-h_\epsilon}\right).
\]
Thus
\[
R_{\lambda\bar{\nu}}+K_{\lambda\bar{\nu}}
=
-\partial_\lambda\partial_{\bar{\nu}}
\log\det(g_{j\bar{k}})
+
\partial_\lambda\partial_{\bar{\nu}}
\bigl\{\log  \det(g_{j\bar{k}})+h^*_\epsilon\bigr\}
=
\partial_\lambda\partial_{\bar{\nu}}
h_\epsilon^{*}.
\]

The Bochner-Kodaira-Morrey-Kohn formula gives  the following estimate

\begin{equation}\label{BKMHK-02} 
\begin{split}
&\|\ov\p^{E} \phi\|_{\O_{r_j},h_\e}^2+\|\ov\p^{E *}\phi\|_{\O_{r_j},h_\e}^2\\ &=\|\ov{\nabla}^E\phi\|_{\O_{r_j},h_\e}^2+ (\h{Ricc} (\phi), \phi)_{\O_{r_j},h_\e} + (K(\phi), \phi)_{\O_{r_j},h_\e}+(\h{Levi}_\rho(\phi), \phi)|_{\p \O_{r_j},h_\e}\\
&\ge  \int_{\O_{r_j}} 
g^{\bar{\alpha}\lambda}
\bigl(R_{\lambda\bar{\nu}}+K_{\lambda\bar{\nu}}\bigr)
\varphi_{\bar{\alpha}}
\overline{\varphi^{\nu}}e^{-h_\epsilon}\\
&\ge   \int_{U(p_1)\cup U(p_2)} \sum_{\lambda,\nu=1}^{n}\p_\lambda\p_{\ov \nu}h_{\e}^*(p) \phi^{ \lambda}\overline { \phi^{ \nu}}\ e^{-h_{\e}(p)} d\l \\
&\ge C_1\|A(\phi)\|_{h_\e}^2.
\end{split}
\end{equation}
Let $\beta^*$ be a holomorphic $n$-form over $M$ with $\beta(p_j)\not =0$, $j=1,2$. Shrinking $U(p_j)$ if needed, we assume that $\beta^*\not =0$ over $U(p_1)\cup U(p_2).$

Define $\beta  = \chi_0(p) \beta^*$ over $U(p_1)$ and to be zero otherwise. Then $\beta$ is a smooth section of $E$  compactly supported  in   $G_{p_1}(B(0, \frac{1}{2}))$.
Then $\eta:=\ov\p{\beta}\in \L_c^{(0,1)}(\O_{r_j}, E)$ with $\ov\p \eta=0$.
Notice that  $A\eta=A^*\eta= \eta$.

By the H\"ormander smoothing lemma \cite{Hor},  (\ref{BKMHK-02}) holds for any $\phi\in \h{Dom}(\ov\p^{E}|_{\O_{r_j}})\cap  \h{Dom}(\ov{\p}^{E *}|_{\O_{r_j}})$.  By Theorem 1.1.4 of \cite{Hor} and the regularity of solutions of the  $\ov\p$-equation,
 we can find a $\psi_{\epsilon, j}\in L^2(\O_{r_j},E)\cap \L^{(0,0)}(\O_{r_j}, E)$ such that
$\ov \p \psi_{\epsilon, j}=\eta$ with $$\|\psi_{\epsilon, j}\|^2_{\O_{r_j},h_\e}\le \frac{1}{C_1}\|\ov \p \eta\|^2_{\O_{r_j},h_\e}=\frac{1}{C_1}\int_{U(p_1, 1)\sm U(p_1,\frac{1}{4})}<\ov\p \eta, \ov\p \eta> \frac{\omega^n}{n!}.$$
Here $U(p_1,\delta)=\Phi_1(B(0,\delta) $  for $0<\delta<1$.
Notice that   it holds that
\begin{eqnarray}
e^{-h^*_\e}&=& e^{-m\sigma} {1\over \big((|\Phi_1(p)-\Phi_1(p_1)|^2+\e)(|\Phi_2(p)-\Phi_2(p_2)|^2+\e)\big)^{n\chi_0(p)}}\ge C_2 \hbox{ over }M,
\end{eqnarray}
\begin{eqnarray}
e^{-h^*_\e} \le  C_3 \quad \hbox{over}\  U(p_1)\sm U(p_1,\frac{1}{4}),
\end{eqnarray} 
and
\begin{eqnarray}
\|\psi_{\epsilon, j}\|_{\O_{r_j},h_\e}^2&=&\int_{\O_{r_j}}
 <\psi_{\epsilon, j},  \psi_{\epsilon, j}>_{h_\epsilon}\frac{\o^n}{n!}= \big| \int_{\O_{r_j}}
 \psi_{\epsilon, j}\wedge \overline{\psi_{\epsilon, j}}e^{-h^*_\e}\big| \nonumber\\
 &\ge & \big| C_2\int_{\O_{r_j}}
 \psi_{\epsilon, j}\wedge \overline{\psi_{\epsilon, j}}\big|, \ \hbox{as}\  <\psi_{\epsilon, j},  \psi_{\epsilon, j}>_{h_\epsilon}\frac{\o^n}{n!}= b_n\psi_{\epsilon, j}\wedge \overline{\psi_{\epsilon, j}}e^{-h^*_\e}.
\end{eqnarray}
Here  $C_2, C_3$ are  certain positive numbers   independent of $j$ and $\e$. Hence, we get that  the sequence$\{\psi_{\e,j}\}_{j=1}^{\infty}$ has a uniformly bounded  Bergman norm ($L^2$ norm).  Now, by a standard use of the normal family argument to $\{\psi_{\e,j}-\psi_{\e,\ell}\}_{j\ge \ell}$ and  by passing to a subsequence if needed, we can assume
$\{\psi_{\e,j}\}_j$
converges uniformly on any compact subset of $M$ to $\psi_\e$  such that $\overline{\partial }\psi_\e=\eta=\overline{\partial}\beta$ and  $\|\psi_\e\|^2_{M,h_\e}\le C$ with $C$ independnet of $\e$. Letting $\e\ra 0^+$ and passing to a subequence, we can assume $\psi_\e- \beta$ converges, along a certain subsequence,  uniformly on compact subsets of $M$ to $\psi-\beta\in A^{2}(M)$
with $\ov\p \psi=\eta$.
Write $\psi=\psi_0 \beta^*$ in $U(p_1)\cup U(p_2)$. ( Note that   $\beta^*\not =0 $ over $\overline{U(p_1)\cup U(p_2)}$. 
Applying the Fatou lemma,
we get
$$\int_{U_(p_1)\cup U(p_2)} \frac{|\psi_0|^2  e^{-\sigma}}{ \big(|F(p)-F(p_1)||F(p)-F(p_2)|\big)^{2n \chi_0}}\frac{\omega^n}{n!}\le C$$
which forces $\psi_0(p_1)=0$ and also $\psi_0(p_2)=0$.  

Thus $\gamma=\psi- \beta \in A^2(\Omega)$. We then have $\gamma(p_1)=-\beta(p_1)\not = 0$ but $\gamma (p_2)=\beta (p_2)=0$. This conclude the proof of Proposition \ref{1-3}.
\end{proof}

\begin{proof}[Proof of Theorem \ref{mainthm3}] We need only apply Proposition \ref{1-3} with $\sigma=|F(z)|^2$, which is strongly plurisubharmonic as  $F$ is a local biholomorphism from $M$ into $\BB^n$.
\end{proof}
\medskip

We next give an immediate application of Theorem \ref{mainthm3}.

\begin{theorem}
Let $M$ be a Stein manifold.
Suppose its Bergman metric is well-defined near  point $p\in M$
and its holomorphic sectional curvature is a negative constant near $p$.
Then there exists a complex analytic hypersurface
$E^{*}\subset M$
such that
$ 
M\setminus E^{*}
$
is biholomorphic to $\BB^{n}\sm E$,
where $E^*$ could be empty and $E$  is a pluripolar closed subset of $\BB^{n}$.
\end{theorem}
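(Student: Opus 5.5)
The plan is to reduce the statement to Theorem~\ref{mainthm3} applied to the complement of an analytic hypersurface.

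\emph{Step 1.} Since the Bergman metric is well defined near $p$, Proposition~\ref{99-01} gives a complex analytic hypersurface $E^*\subset M$, possibly empty, such that $\omega^B_M$ is positive definite on $M\setminus E^*$. The construction there makes $E^*=E_0\cup E_1$, where $E_0$ is the zero set of $\varphi_0$ and $E_1$ is the zero set of the Jacobian determinant. Neither set contains a neighborhood of $p$, so $M\setminus E^*$ is a connected open dense subset of $M$. By Docquier--Grauert (Remark~\ref{rem:2.2}), $M\setminus E^*$ is Stein.

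\emph{Step 2.} Compare Bergman spaces. For a closed analytic hypersurface (indeed any closed pluripolar set) $E^*$, every $L^2$ holomorphic $n$-form on $M\setminus E^*$ extends holomorphically across $E^*$, by the Riemann extension theorem for $L^2$ holomorphic functions across thin sets. Hence $A^2(M\setminus E^*)=A^2(M)|_{M\setminus E^*}$ isometrically, since $E^*$ has measure zero. The Bergman kernels then agree on $M\setminus E^*$, and so do the Bergman forms. Therefore the Bergman metric of $M\setminus E^*$ is well defined there and equals $\omega^B_M$ restricted.

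\emph{Step 3.} Propagate the curvature. Near $p$ the holomorphic sectional curvature of $\omega^B_M$ is a negative constant $c$. The Bergman metric is real analytic on the connected set $M\setminus E^*$, since $\log k_M$ is real analytic where $k_M\neq 0$. Its holomorphic sectional curvature is a real-analytic function on the projectivized tangent bundle over the connected manifold $M\setminus E^*$. It equals $c$ on an open set, so by the identity theorem it equals $c$ everywhere on $M\setminus E^*$. Now Theorem~\ref{mainthm3} applies to the Stein manifold $M\setminus E^*$ and yields a biholomorphism onto $\BB^n\setminus E$ with $E$ pluripolar. The set $E$ is closed in $\BB^n$ because it is the complement of the open image of a biholomorphism.

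\emph{Main obstacle.} The main point needing care is the analyticity argument in Step 3. One must check that the curvature function is real analytic on a connected base. The projectivized bundle over a connected manifold is connected, so the identity theorem applies. One must also ensure $p\notin E^*$, or at least that the open set where the curvature is constant meets $M\setminus E^*$; the latter holds since $E^*$ is nowhere dense.
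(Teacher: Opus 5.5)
Your proposal is correct and follows the paper's proof: Proposition~\ref{99-01}, then real analyticity to propagate the constant curvature off $E^*$, then Docquier--Grauert for Steinness, then Theorem~\ref{mainthm3}. Your Step 2 uses $L^2$ removability to identify $A^2(M\setminus E^*)$ with $A^2(M)$, so that the Bergman metric of $M\setminus E^*$ really is the restriction of $\omega^B_M$; this makes explicit a point the paper uses without comment.
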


\begin{proof}
By Proposition \ref{99-01},
we have a complex hypersurface
$E^{*},$ 
which may be empty,
such that the Bergman metric is well-defined on
$M\setminus E^{*} $.
By the analyticity of the Bergman metric and the assumption,
the Bergman metric on
$M\setminus E^{*}$ 
has constant holomorphic sectional curvature.
Since $M\setminus E^{*}$ is also Stein by a result of Docquier and Grauert \cite{DG},
the rest of the proof follows from
Theorem \ref{mainthm3}.
\end{proof}

We notice that the above theorem is optimal
by the following example.

Let
\[
\Omega
=
\left\{
(z,w)\in\mathbb C^{2}
:
|z|^{2}< (1+|w|)^{-2}
\right\}.
\]

Then $\Omega$ is an unbounded strongly pseudoconvex manifold.

Let
\[
E^{*}
=
\{(z,0)\in\mathbb C^{2}: z\in\mathbb C\},
\]
and denote
$E=E^{*}.$ 
Then
\[
\Omega\setminus E
\longrightarrow
{\BB^2} \setminus E
\]
is biholomorphic via
\[
F=(z,zw).
\]


The Bergman form on the  unit 2-ball is given by 
\[
\omega^B_{{\BB}^{2}}
=
3i\partial\bar\partial
\log\!\left(\frac{1}{1-|Z|^{2}-|W|^{2}}\right).
\]
Hence 
\[
\omega_{\Omega\sm E^*}^B=F^{*}(\omega^B_{{\BB^{2}}\sm E})
=
-3i\partial\bar\partial
\log
\!\left(
1-|z|^{2}(1+|w|^{2})
\right),
\]
which is the Bergman form of $\Omega$.

Near  $E^*$,
\[
F^{*}(\omega_{B^{2}})
=
3i\partial\bar\partial
\left(
|z|^{2}(1+|w|^{2})
+O(|z|^{4})
\right).
\]
Therefore
\[
\omega_{\Omega}^B=
3i
\left\{
(1+|w|^{2})
\,dz\wedge d\bar z
+
\bar z w dz\wedge d\bar w
+
z\bar w\,dw\wedge d\bar z
+
|z|^{2}\,dw\wedge d\bar w
+O(|z|^2)
\right\}.
\]
Hence, \(\omega^B_{\Omega}\) is degenerate along \(E^*\).

Now, for any closed subset \(E_0 \subset E^*\), by the removable singularity theorem for $L^2$-integrable holomorphic functions, there exists a biholomorphic map from \(\Omega \setminus E_0\) to a subdomain of \(\mathbb{B}^2\) if and only if \(E_0 \equiv E^*\), since this would imply that   the Bergman form of \(\Omega \setminus E_0\) is  positive definite.

\begin{remark}

More generally, let $M$ be a Stein manifold whose Bergman metric is well defined, and let $D$ be a bounded pseudoconvex domain with complete Bergman metric. In fact, $D$ may more generally be taken to be a bounded domain in a Stein manifold equipped with a complete Bergman metric.

Suppose that there exists a germ of a holomorphic map $f$ at $p_0\in M$ such that
\[
f(p_0)\in D
\]
and
\[
f^{*}(\omega_D^B)=\lambda\,\omega_M^B
\]
near $p_0$, for some constant $\lambda>0$. By a theorem of Mok \cite{Mo2}, $f$ extends to a holomorphic map
\[
F\colon M\longrightarrow D
\]
which is likewise a local isometry up to the constant factor $\lambda$.

Although Mok states his theorem for bounded domains, the interior continuation and
monodromy arguments in his proof require only that the canonical Bergman-Bochner map
${\mathcal B}_D$ of $D$ be an injective holomorphic isometric immersion and that
$(D,\omega_D^B)$ be complete. Thus, Mok's argument applies equally well in the
setting considered here. For the reader's convenience, we provide below a 
sketch of ideas of Mok   when $D$ is a domain, and refer to Mok's paper
\cite{Mo2} for  details.

  Let $(U,z)$ be a holomorphic chart near $p_0$
such that $f$ is
biholomorphic on $U$, with
\[
f^*(\omega_D^B)=\lambda\,\omega_M^B.
\]
Then $\mathcal B_D\circ f$ is an isometric embedding of
$(U,\lambda\,\omega_M^B)$ into $(\mathbb P^\infty,\omega_{\mathrm{st}})$.
Since $\lambda\,\omega_M^B$ is again a real-analytic K\"ahler metric, the Calabi
extension theorem \cite{Ca}, in the form formulated in \cite{HuLi2}, implies
that $f$ can be analytically continued along any curve issuing from $U$ as a
multivalued local holomorphic isometric map from $M$ into $D$.

We next recall arguments of Mok \cite{Mo2} as follows: For $p,q$ sufficiently
close to $p_0$, one has
\[
\left(
\frac{
K_M(p,p)\,K_M(q,q)
}{
K_M(p,q)\,K_M(q,p)
}
\right)^{\lambda}
=
\frac{
K_D\bigl(f(p),f(p)\bigr)\,
K_D\bigl(f(q),f(q)\bigr)
}{
K_D\bigl(f(p),f(q)\bigr)\,
K_D\bigl(f(q),f(p)\bigr)
}.
\]
The expression inside the parentheses on the left-hand side is 
 the Calabi diastasis function of $M$, and is  real analytic wherever
it is defined. In particular, for $p,q\approx p_0$, it holds that 
\[
\left|
\frac{
K_M(p,p)\,K_M(q,q)
}{
K_M(p,q)\,K_M(q,p)
}
\right|^{\lambda}
=
\left|
\frac{
K_D\bigl(f(p),f(p)\bigr)\,
K_D\bigl(f(q),f(q)\bigr)
}{
K_D\bigl(f(p),f(q)\bigr)\,
K_D\bigl(f(q),f(p)\bigr)
}
\right|.
\]
For each fixed $q$, the left-hand side is a well-defined real-analytic function
away from the zero set of $K_M(\cdot,q)$. Hence, by uniqueness of real-analytic functions, one may continue the above identity along curves in the $p$-variable.
Setting $q=p$, one obtains as in Mok \cite{Mo2} for any new branch
$\widehat f$ of $f$ near $p_0$,
\[
\bigl|K_D\bigl(\widehat f(p),\widehat f(p)\bigr)\bigr|\,
\bigl|K_D\bigl(f(p),f(p)\bigr)\bigr|
=
\bigl|K_D\bigl(\widehat f(p),f(p)\bigr)\bigr|^2.
\]
By Mok's argument using the equality case of the Schwarz inequality, this gives
\[
\widehat f(p)=f(p),
\]
since $A^2(D)$ separates points. Thus the analytic continuation has trivial
monodromy, and $f$ extends to a globally defined local  biholomorphic map
\[
F\colon M\longrightarrow D.
\]
We emphasize that this conclusion does not require $M$ to be Stein.

In particular, since $D$ is bounded, $M$ admits a smooth bounded strictly
plurisubharmonic function
\[
\sigma:=|F|^2.
\]
Proposition~\ref{1-3} therefore implies that the Bergman space $A^2(M)$
separates points of $M$.

We thank Yuan Yuan for helpful discussions that led to this observation. 
\end{remark}

\section{Flat Bergman metrics}
We prove in this section the following:

\begin{proposition}\label{thm2.1}
Let $M$ be a Stein manifold of complex dimension $n\ge 1$. Assume its Bergman metric of $M$ is well-defined  and has constant zero holomorphic sectional curvature.  Then its Bergman space must separate points on $M$.
\end{proposition}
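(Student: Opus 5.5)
The plan mirrors the negative-curvature case: build a smooth strictly plurisubharmonic function on $M$ that is bounded from above, then invoke Proposition~\ref{1-3}. The flat model replaces the ball, and a suitable isometric embedding replaces $\mathcal T$.

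\textbf{Step 1: a global flat chart.} Since $\omega^B_M$ is real analytic and has zero holomorphic sectional curvature, Bochner's theorem (Theorem 6 in [Bo]) gives, near any $p_0\in M$, a biholomorphic map $F$ from a neighborhood $U$ onto a domain in $\CC^n$ with
\[
F^*(\lambda\, i\partial\dbar |z|^2)=\omega^B_M
\]
for some $\lambda>0$. The flat metric $\lambda\, i\partial\dbar|z|^2$ admits the one-to-one holomorphic isometric embedding
\[
\mathcal T=[\,e^{\lambda |z|^2/2}\ \text{expanded}\,]=\Big[\Big(\sqrt{\lambda^{|\alpha|}/\alpha!}\;z^\alpha\Big)_{\alpha}\Big]
\]
into $\PP^\infty$. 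Indeed, $\sum_\alpha \lambda^{|\alpha|}|z^\alpha|^2/\alpha! = e^{\lambda|z|^2}$, so $i\partial\dbar\log$ of it is $\lambda\, i\partial\dbar |z|^2$. Its components include the constant $1$ and the coordinate functions $\sqrt\lambda\, z_j$.

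By Calabi rigidity there is a linear isometry $\mathcal L$ with $\mathcal L\circ\mathcal B=\mathcal T\circ F$ on $U$. By the Calabi extension theorem, $\mathcal T\circ F$, and hence $F$ (read off from the $z_j$-components divided by the constant component), continues along every curve in $M$. Because $\mathcal L\circ\mathcal B$ is single valued on $M$ and $\mathcal T$ is injective, the continuation is single valued. Completeness of $\CC^n$ ensures the continuation never leaves the model. We obtain a global local biholomorphism $F:M\to\CC^n$, and the identity $\mathcal L\circ\mathcal B=\mathcal T\circ F$ holds on all of $M$.

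\textbf{Step 2: a bounded-above strictly psh function.} Here the flat case departs from the ball case, since $|F|^2$ is no longer bounded. I would exploit the global identity at the level of kernels. In the trivialization where $\mathcal B$'s first representative is $B$ and $L$ is the linear isometry, we get
\[
\|B(p)\|^2=|h(p)|^2\, e^{\lambda|F(p)|^2},
\]
where $h$ is the nowhere-vanishing holomorphic factor relating the representatives $L(B)$ and $T(F)$. Since $\|B\|^2=k_M$ is the Bergman kernel on the diagonal, the function
\[
\sigma:=\lambda|F|^2-\log k_M+\log|h|^2
\]
vanishes. This identity alone gives nothing new, so instead I would choose the candidate
\[
\sigma:=\log\big(\|B\|^2\big)-\lambda|F|^2 .
\]
This equals $\log|h|^2$ and is pluriharmonic, which is also not useful directly. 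The better route is to use finiteness of the Bergman norm. Each component $\phi_j\in A^2(M)$ forces $\int_M |h|^2 e^{\lambda|F|^2}\,d\mathrm{vol}_{\mathrm{eucl}}<\infty$ locally in flat coordinates. The aim is then to show that $M$ carries $-e^{-\lambda|F|^2}$-type or $-\log(1+|F|^2)^{-1}$-type bounded strictly psh functions.

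Concretely, I would try $\sigma=-(1+|F|^2)^{-\delta}$ with small $\delta>0$. A direct computation on $\CC^n$ shows that $-(1+|z|^2)^{-\delta}$ is strictly plurisubharmonic and bounded above for $0<\delta$ small; its complex Hessian is positive because the $\delta(1+|z|^2)^{-\delta-1}\,i\partial\dbar|z|^2$ term dominates the negative rank-one term when $\delta<1$. Pulling it back by the local biholomorphism $F$ keeps it smooth, strictly psh and bounded above.

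\textbf{Step 3: conclude.} Apply Proposition~\ref{1-3} with this $\sigma$; it yields that $A^2(M)$ separates points.

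I expect Step 1 to be the main obstacle: specifically, verifying that the continued $F$ is single valued using only injectivity of $\mathcal T$ and the global identity, and making sure the extension argument does not need completeness of $M$. Step 2 is elementary once the global $F$ exists, since any Euclidean bounded-above strictly psh function such as $-(1+|z|^2)^{-\delta}$ pulls back.
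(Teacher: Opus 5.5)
Your Step 1 matches the paper. Step 2, however, has a genuine gap that cannot be repaired within your approach.

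The function $-(1+|z|^2)^{-\delta}$ is not plurisubharmonic on $\CC^n$. Write $s=|z|^2$ and take $\xi$ parallel to $z$. Its complex Hessian in that direction is $\delta(1+s)^{-\delta-2}(1-\delta s)|\xi|^2$, which is negative once $|z|^2>1/\delta$. The rank-one term is not dominated: its ratio to the main term tends to $1+\delta>1$.

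More fundamentally, restricting to complex lines and using Liouville's theorem for subharmonic functions on $\CC$ shows that every plurisubharmonic function on $\CC^n$ that is bounded above is constant. So no function of $F$ alone can work here. Indeed, the paper shows $F(M)=\CC^n\setminus E$ with $E$ of Lebesgue measure zero.

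A sanity check makes this concrete. If your Step 2 were valid, it would apply to $M=\CC^n$ with $F=\mathrm{id}$. Proposition~\ref{1-3} would then produce nonzero elements of $A^2(\CC^n)=\{0\}$. Your argument never uses the Bergman space after Step 1, and that is the warning sign.

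The missing idea is a pluriharmonic correction term coming from $A^2(M)$ itself, together with a global pointwise bound on it. The paper proceeds as follows.

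\begin{enumerate}
\item Normalize so that $F^*\omega_{\mathrm{eucl}}=\omega^B_M$. Take $\phi_0\in A^2(M)$ with $\phi_0(p_0)\neq 0$, complete it to an orthonormal basis with $\phi_j(p_0)=0$, and write $\phi_j=\lambda_j\phi_0$ near $p_0$. Then $\sum_j|\lambda_j|^2=e^{|F|^2}-1$.
\item By Calabi rigidity (or the D'Angelo lemma), $\{\phi_0F^\beta/\sqrt{\beta!}\}_\beta$ is an orthonormal basis of $A^2(M)$. Base-point freeness then forces $\phi_0$ to vanish nowhere.
\item The push-forward under $F$ of the measure $b_n\phi_0\wedge\overline{\phi_0}$ has moments $\alpha!\,\delta_{\alpha\beta}$. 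These are the moments of the Gaussian measure $\pi^{-n}e^{-|z|^2}dv$, so by uniqueness of the moment problem the two measures coincide.
\item Comparing the masses of small balls around $F(q)$, sheet by sheet, gives $2^n\pi^n e^{|F(q)|^2}|\phi_0(q)|^2\le 1$ for every $q\in M$.
\item Hence $\sigma=|F|^2+\log|\phi_0|^2$ is smooth, strictly plurisubharmonic and bounded above, and Proposition~\ref{1-3} applies.
\end{enumerate}

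Your remark that $\log k_M-\lambda|F|^2$ is pluriharmonic pointed in the right direction. What you actually need, though, is an upper bound for $|F|^2+\log|\phi_0|^2$ on all of $M$. That bound requires the moment-problem argument or an equivalent global input.
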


This together with the result  by the authors and Treuer proved in  \cite{HuLi1} gives the following:

\begin{theorem}\label{mainthm4}
Let $M$ be a Stein manifold of complex dimension $n\ge 1$. Assume the Bergman metric of $M$ is well-defined. 
Then it can not be flat, i.e.,
it can not   have  constant  zero holomorphic sectional
  curvature on $M$.
\end{theorem}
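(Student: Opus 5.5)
Theorem \ref{mainthm4} follows by combining Proposition \ref{thm2.1} with the result of \cite{HuLi1} (joint with Treuer): if $M$ is Stein, its Bergman metric is well defined, and $A^2(M)$ separates points, then the Bergman metric cannot be flat. So the plan is to argue by contradiction. Suppose the Bergman metric of $M$ is well defined and has constant zero holomorphic sectional curvature. Proposition \ref{thm2.1} then says $A^2(M)$ separates points, and the theorem of \cite{HuLi1} says this is impossible. The real work is therefore in Proposition \ref{thm2.1}, and I would prove it along the lines of Section 3.

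\textbf{Step 1: a global local isometry to $\CC^n$.} Since the holomorphic sectional curvature vanishes, Bochner's theorem gives, near $p_0$, a local biholomorphism $F$ into $\CC^n$ with $F^*(\lambda\, i\p\dbar|z|^2)=\omega^B_M$. The flat metric $\lambda\, i\p\dbar|z|^2$ is induced into $\PP^\infty$ by
\[
\mathcal T=[\exp(\lambda\langle\cdot,\cdot\rangle)\text{-expansion}],
\]
that is, by $[\,\sqrt{\lambda^{|\alpha|}/\alpha!}\,z^\alpha\,]_\alpha$, which is injective and contains each $z_j$ as a component up to a constant. Exactly as in Section 3, Calabi rigidity gives a linear isometry $\mathcal L$ with $\mathcal L\circ\mathcal B=\mathcal T\circ F$ on $U$. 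Calabi extension lets $F$ continue along every curve, and the identity $\mathcal L\circ\mathcal B=\mathcal T\circ F$ forces single-valuedness, since $F$ is read off from the components $z_j$ of $\mathcal L\circ\mathcal B$, which are globally defined. Here completeness of the flat metric on $\CC^n$ ensures the image never escapes. The result is a global local biholomorphism $F:M\to\CC^n$ with $\mathcal B(p_1)=\mathcal B(p_2)$ if and only if $F(p_1)=F(p_2)$.

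\textbf{Step 2: a weight to replace Proposition \ref{1-3}.} Now $|F|^2$ is strictly plurisubharmonic but unbounded, so Proposition \ref{1-3} does not apply directly. This is the main obstacle. What the proof of Proposition \ref{1-3} actually uses is a strictly plurisubharmonic weight $\sigma$ with $e^{-m\sigma}$ bounded below, so that the weighted $L^2$ bound controls the unweighted Bergman norm. I would exploit the Bergman kernel itself. By Step 1, $k_M=|L(B)|^2$ in suitable frames equals $|g|^2\,e^{\lambda|F|^2}$, where $g$ is a nonvanishing holomorphic function locally, namely the first component.

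I would then take the weight $h_\e^*$ built from the log terms at $p_1,p_2$, plus $m\,\mathrm{Re}$-free pieces, and use the metric $e^{-h}$ on $K_M$ given by $\omega^B_M$ itself, i.e. $\kappa=\log\det(\text{Bergman metric})$. Its curvature combined with Ricci gives $\p\dbar h^*$, where $h^*$ is a multiple of $\log k_M$, and $\log k_M$ is strictly plurisubharmonic. The weight $e^{-\log k_M}$ makes $\int|\psi|^2/k_M$ finite. I would then compare with the Bergman norm via the pointwise inequality $|\psi|^2\le \|\psi\|^2 k_M$ applied to the holomorphic correction, or else use a bounded function such as $-e^{-|F|^2}$, which is plurisubharmonic but only weakly so, perturbed on compacts. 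I expect this comparison to be the delicate point.

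\textbf{Step 3: conclusion.} With such a weight, I would run the $\dbar$ construction of Proposition \ref{1-3} verbatim: solve on the $\O_{r_j}$, pass to the limits $j\to\infty$ and $\e\to0$, and obtain $\gamma\in A^2(M)$ with $\gamma(p_1)\ne0$ and $\gamma(p_2)=0$. This proves Proposition \ref{thm2.1}, and Theorem \ref{mainthm4} follows.
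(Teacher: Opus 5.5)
Your top-level reduction (Proposition~\ref{thm2.1} plus the point-separating result of \cite{HuLi1}) and your Step~1 agree with the paper. The gap is in Step~2, which is the actual content of Proposition~\ref{thm2.1}. You correctly see two things. First, Proposition~\ref{1-3} needs a smooth strictly plurisubharmonic function bounded above, i.e.\ a weight $e^{-h}$ bounded below, so that the weighted $L^2$ bound controls the Bergman norm. Second, $|F|^2$ is not bounded. Neither of your remedies supplies such a function.

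The weight $\log k_M$ has the right curvature. But to pass from $\int|\psi|^2/k_M<\infty$ to $\int|\psi|^2\,d\mathrm{vol}_M<\infty$ you need the pointwise upper bound $K_M\le C\,d\mathrm{vol}_M$, which you do not prove. The inequality $|\psi|^2\le\|\psi\|^2k_M$ points the wrong way and presupposes $\psi\in A^2(M)$.

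The alternative $-e^{-|F|^2}$ is not even plurisubharmonic. In $F$-coordinates its Levi form in the radial direction is $e^{-t}(1-t)$ with $t=|F|^2$, which is negative for $t>1$. More fundamentally, a plurisubharmonic function on $\CC^n$ that is bounded above is constant. So no weight of the form $u\circ F$ with $u$ defined on $\CC^n$ can work; the bounded weight must involve data living on $M$ itself.

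The missing idea is the paper's pointwise bound on the Bergman kernel, obtained from the moment problem.
\begin{itemize}
\item Take an orthonormal basis $\phi_0,\phi_1,\dots$ of $A^2(M)$ with $\phi_j(p_0)=0$ for $j\ge1$, and write $\phi_j=\lambda_j\phi_0$ near $p_0$.
\item The isometry condition gives $\sum_j|\lambda_j|^2=e^{|F|^2}-1$. Calabi rigidity (or D'Angelo's lemma) then shows that $\{\phi_0F^\beta/\sqrt{\beta!}\}_\beta$ is itself an orthonormal basis of $A^2(M)$. In particular $\phi_0$ has no zeros, by base-point freeness.
\item Hence the pushforward measure $\nu=F_*(b_n\phi_0\wedge\ov{\phi_0})$ satisfies $\int z^\alpha\ov{z}^\beta\,d\nu=\alpha!\,\delta_{\alpha\beta}$. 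These are the moments of the Gaussian $\pi^{-n}e^{-|z|^2}dv$, and determinacy of this moment problem forces $\nu$ to be that Gaussian.
\item Compare $\nu$ on small balls around $F(q)$ with the contribution of the single sheet of $F$ through $q$. This yields $2^n\pi^n e^{|F(q)|^2}|\phi_0(q)|^2\le1$ for all $q\in M$, with $|\phi_0|^2$ measured in the flat metric.
\end{itemize}
Therefore $\sigma=|F|^2+\log|\phi_0|^2$ is smooth, strictly plurisubharmonic (the second term is pluriharmonic) and bounded above. Proposition~\ref{1-3} then applies verbatim, with no modification of its proof.

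Note that this $\sigma$ is, up to an additive constant, exactly $\log k_M$ in the flat coordinates $z=F$. So the moment-problem estimate is precisely the inequality your first option was missing.
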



\begin{proof}[Proof of Proposition \ref{thm2.1}]
Let $M$ be a Stein manifold.
Assume that the Bergman metric  of $M$ exists and has constant zero holomorphic sectional curvature.
Let $p_0\in M$ and let 
$(U,z)$ with $z(p_0)=0$ be a holomorphic coordinate chart. (Still
write $U$ for $z(U)$ for brevity when there is no confusion).
Shrinking $U$ if necessary, there is a biholomorphic map  $F:=(f_1,\cdots,f_n)$ from $U$
to a neighborhood $V\subset \CC^n$ of $0$ such that $F(p_0)=0$ and
$F^*(\o_{\eucl})= \o_M^B,$ where $\o_{eucl}=i\p\ov{\p}|z|^2=i\p\ov{\p}\log(e^{|z|^2})$ is the
Euclidean form of $\CC^n$ and   $d^2s_{\eucl}=2\h{Re}\left(\sum_{j=1}^{n}dz_j\otimes \ov{dz_j}\right).$

 Notice that there  is  a holomorphic isometric embedding  $\cal P$ from 
 $(\CC^n, \o_{euclid})$ into $(\PP^\infty, \o_{st}^\infty)$. Indeed, write $e^{|z|^2}=1+\sum_{j=1}^\infty \frac{|z|^{2j}}{j!}=1+\sum_{k=1}^n|z_k|^2+ \sum_{\ell=2}^\infty |P_\ell|^2$ with $P_\ell$ 
 certain holomorphic monomials  of degree at least 2.
  We can then define
 ${\cal P}=[ 1,z_1,\cdots,z_n, P_2, \cdots,P_\ell,\cdots]$.
By the Calabi rigidity theorem \cite{Ca} 
there is    a one-to-one linear isometric embedding  $\cal L$ from   $X_{\cal P}\subset (\PP^\infty, \o_{st}^\infty)$   into  $(\PP^\infty, \o_{st}^\infty)$ such  that
  $${\cal B}={\cal L}\circ {\cal P}\circ F,$$
  where $\B$ is a Bergman-Bochner map of $M$.
 Since ${\cal B}, {\cal L}$ and  ${\cal P}$ are one-to-one,   by the Calabi extension theorem \cite{Ca, HuLi2},  one also immediately concludes that $F$ extends to a local isometric biholomorphism from $M$ onto a certain domain  denoted by $D$ in $\CC^n$.  
 
Next notice that   $F^*(i\p\ov{\p}\log(e^{|z|^2}))=i\p\ov{\p} |F(p)|^2.$ 
Let  $\{\phi_j\}_{j=0}^N$ be an orthonormal basis of the
Bergman space $A^2(M)$ with $\phi_0(p_0)\not =0$ and $\phi_j(p_0)=0$ for
$j\ge 1$. Write $\phi_j=\ld_j \phi_0$ near $p_0$ for $j\ge 1$. Then
$K_M(p,p)=\phi_0(p)\wedge \ov{\phi_0(p)}(1+\sum_{j=1}^{N}|\ld_j|^2)$.
From the local isometric property of $F$,
we have near $p_0$
$$i\p\ov{\p}\log(1+\sum_{j=1}^{N}|\ld_j|^2)=i\p\ov\p \log(e^{|F(z)|^2}).$$
Since $\ld_j(p_0)=0$ and $F(p_0)=0$,   which  are both holomorphic, we conclude that near $p_0$
$$\sum_{j=1}^{N}|\ld_j|^2=\sum_{j=1}^{\infty}\frac{|F|^{2j}}{j!}.$$

In the local coordinates mentioned above, we have  $F(z)=z$ and thus $\{F^\a\}_{\a}$ is a linear independent set. Recall  also the binormial expansion $(x_1+\cdots +x_n)^{k}=\sum_{|\a|=k}\frac {k!}{\a!}x^\a$ which can be easily seen by applying $\p_x^\a$ of both sides
with $|\a|=k$. 
Write $\L= (1,\cdots, \ld_j(p),\cdots)$. Then
we have $$\|\L\|^2=\sum_{j=1}^{N}|\ld_j|^2=\sum_{|\b|\ge 1}^{\infty}\big|\frac{F^\b}{\sqrt{\b!}}\big|^2.$$
 Making use of  the Calabi rigidity theorem or the D'Angelo lemma \cite{DA},
there is  a linear one-to-one  and surjective isometry $L$ from   $X_{F_0}\subset \ell^2$ to $X_\L\subset \ell^2$ 
such that 
$$(1,\cdots, \ld_j(p),\cdots)=L\left((1,f_1(p),\cdots, f_n(p),\cdots, \frac{F^{\b}(p)}{\sqrt{\b!}},\cdots)\right), \ \h{ and }$$
$$\left(1,f_1(p),\cdots, f_n(p),\cdots, \frac{F^{\b}(p)}{\sqrt{\b!}},\cdots\right)=L^{-1}\left((1,\cdots, \ld_j(p),\cdots)\right).$$
Here $F_0=(1,f_1(p),\cdots, f_n(p),\cdots, \frac{F^{\b}(p)}{\sqrt{\b!}},\cdots)$.
Hence we conclude that $$\{\phi_0, \phi_0 f_1,\cdots, \phi_0 f_n,\cdots, \phi_0 \frac{F^{\b}(p)}{\sqrt{\b!}},\cdots\}$$ is also an orthonormal basis of $A^2(M)$. In particular, since $A^2(M)$ is base point free, we see that $\phi_0\not = 0$

 We equip $M$ with its Bergman-K\"ahler metric $\o^B_M=F^*(\o_{eucl})$, which is flat.  For $p_0\in M$, as mentioned above, we also use $z=F(p)$ as a local coordinate  for $p\approx p_0$ . Then as in the previous discussion,
we have  for any $\a, \b\in A^2(M)$, $$(\a,\b)=\int_{M}<\a,\b> d{\rm{vol}}_M= b_n\int_{M}\a\wedge \ov{\b}\ \ \h{with  } b_n={{( i)}^n}(-1)^{\frac{n^2-n}{2}},$$
where $d \rm{vol}_M= \frac{\o^n_B}{n!}$ is the volume form induced by the metric $\o_{\eucl}=i\p\ov\p(|z|^2)$.  We will write $|\a|^2=<\a,\a>$ for any $\a\in A^2(M).$
Then the orthonormality of $$\{\phi_0, \phi_0 f_1,\cdots, \phi_0 f_n,\cdots, \phi_0 \frac{F^{\b}(p)}{\sqrt{\b!}},\cdots\}$$  gives the following:
$$b_n\int_M F^\a\ov{F^\b} \phi_0\wedge \ov  \phi_0= \d_{\a}^{\b}\a!.$$


Let $D=F(M)$ and let $\nu$ be the pull back measure on $\CC^n$ defined by 
$$
\nu(E) =\int_{F^{-1}(E\cap D)} <\phi_0(q),{ \phi_0(q)}>d{\rm{vol}}_M(q)=b_n\int_{F^{-1}(E\cap D)} \phi_0(q)\wedge \ov{ \phi_0(q)}
$$
for any Lebesgue measurable set $E\subset\CC^n$. 
Then
$$
L^{\nu}[z^\alpha \zbar^\beta]:=\int_{\CC^n} z^\alpha\zbar^\beta d \nu= b_n\int_M \overline{F(q)}^\beta F(q)^\alpha \phi_0(q)\wedge \ov{ \phi_0(q)}=\alpha! \delta_{\alpha \beta}.
$$
On the other hand,  by a direct  computation (e.g., see  (5.19)  of \cite{HuLi1}), letting $dv$ be the Lebesque measure and 
$$
d\mu=\pi^{-n} e^{-|z|^2} dv,
$$
then
$$
L^{\mu}[z^\alpha \zbar^\beta]:=\int_{\CC^n} z^\alpha \zbar^\beta d\mu=\alpha! \delta_{\alpha \beta},
$$
where $dv$ is the Lebesque measure on $\CC^n$.
By  the classical uniqueness  of the moment problem (see, for example, Theorem 5.1 in \cite{HuLi1} or \cite{Sch}), one has
$$
d\nu=d\mu.
$$
This, in particular,  implies that $D=\CC^n\setminus E$ for a certain $E$ with $m(E)=0$. 

For any $z_0\in D$,   $F^{-1}(z_0)$ has no accumulation point in $M$. Let $F^{-1}(z_0)=\{q_1,\cdots, q_N\}$ with $N$ being  finite or $\infty$. 
Then for any given  $q_j$,  let $\delta_j>0$ be sufficiently small such that $F$ is biholomorphic from a neighborhood $U(q_j, \delta_j)$ of $q_j$, which is contained in a coordinate chart defined above near $q_j$, to $B(z_0, \delta_j)$ with inverse $G_j(z)$.  
Notice that in the holomorphic charts we are using, $F$  is simply the identity map. Therefore we have 
\begin{eqnarray*}
\int_{F^{-1}\left(B(z_0, \delta_j)\right)} |\phi_0(q)|^2dvol_M(q)=
\int_{B(z_0, \delta_j) }  d\nu(z)
=\int_{B(z_0, \delta_j) } \pi^{-n} e^{-|z|^2} dv(z)
\end{eqnarray*}
and
\begin{eqnarray*}
 \int_{F^{-1}\left(B(z_0, \delta_j)\right)}  |\phi_0(q)|^2dvol_M(q)
&\ge& \int_{U(q_j,\d_j)}  |\phi_0(q)|^2dvol_M(q)\\ 
&=&\int_{B(z_0, \delta_j)} |\phi_0(G_j(z))|^2 |G_j^*\left(dvol_M(q)\right)\\
&=& 2^n\int_{B(z_0, \delta_j)} |\phi_0(G_j(z))|^2 dv(z).
\end{eqnarray*}
 Here we notice that the last equality holds as $G_j$ is a holomorphic  isometry. And by our convention here, $d vol_M= 2^n dv$.
Since $\d_j$ could be made arbitrarily small, it  thus follows that
\begin{equation} \label{IIE}
\pi^{-n}e^{-|z_0|^2}\ge 2^n|\phi_0(q_j)|^2,\ \h{or  } e^{-|F(q_j)|^2}  \ge 2^n\pi^n  |\phi_0(q_j)|^2.
\end{equation}
By the arbitrariness of $q_j$, we have for any $q\in M$ the following: 
\begin{equation}
2^n\pi ^ne^{|F(q)|^2} \big| \phi_0(q) \big|^2
\le 1 
\end{equation}
and thus 
\begin{equation}   
\sigma(z):= |F(q)|^2+\log |\phi_0(q)|^2\le -n\log \pi-n\log 2.
\end{equation}
Notice that $|F(q)|^2$ is strongly plurisubharmonic and  $\log |\phi_0(q)|^2$ is pluriharmonic.  Thus $\sigma$ is a smooth strongly plurisubharmonic function over $M$ bounded from above.
By Proposition \ref{1-3}, we conclude
the proof of Proposition \ref{thm2.1}.
\end{proof}
\begin{proof}[Proof of Theorem \ref{mainthm1}] It follows immediately from Theorems \ref{mainthm3} and \ref{mainthm4}.
\end{proof}
Note that the complement of a complex analytic hypersurface in a
Stein manifold is again Stein by \cite{DG}. By the real analyticity of the Bergman
metric, together with Proposition~\ref{99-01} and Theorem~\ref{mainthm4},
we obtain the following  theorem:

\begin{theorem}
Let $M$ be a Stein manifold, and let $p\in M$.
Assume that the Bergman metric of $M$ is well defined in a neighborhood of $p$.
Then the Bergman metric cannot be flat in any neighborhood of $p$.
Equivalently, it cannot have identically vanishing holomorphic sectional
curvature in a neighborhood of $p$.
\end{theorem}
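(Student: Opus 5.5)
The plan is to reduce the local statement to Theorem~\ref{mainthm4} by passing to the open set where the Bergman metric exists globally. I will argue by contradiction. Suppose the Bergman metric of $M$ is well defined on a neighborhood $U$ of $p$ and is flat on some (possibly smaller) neighborhood $U'\subset U$ of $p$. This means its holomorphic sectional curvature vanishes identically on $U'$.

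\textbf{Step 1: remove the degeneracy locus.} By Proposition~\ref{99-01} there is a complex analytic hypersurface $E\subset M$, possibly empty, such that $\omega^B_M$ is positive definite on $M\setminus E$. By the result of Docquier and Grauert \cite{DG} recalled in Remark~\ref{rem:2.2}, $M^*:=M\setminus E$ is again Stein.

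\textbf{Step 2: identify the Bergman data of $M^*$ with those of $M$.} Since $E$ is a hypersurface, it is pluripolar and of measure zero. An $L^2$ holomorphic $n$-form on $M^*$ therefore extends holomorphically across $E$, by the Riemann extension theorem for $L^2$ holomorphic functions in local charts. Hence $A^2(M^*)=A^2(M)$ isometrically, and consequently $K_{M^*}=K_M|_{M^*}$ and $\omega^B_{M^*}=\omega^B_M|_{M^*}$. So $M^*$ is a Stein manifold whose Bergman metric is well defined everywhere.

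\textbf{Step 3: propagate flatness.} The holomorphic sectional curvature of $\omega^B_{M^*}$ is real analytic on $M^*$, because $\log k_M$ is real analytic. It vanishes on the nonempty open set $U'\setminus E$. The set $M^*$ is connected, since removing a proper analytic subvariety from a connected complex manifold does not disconnect it. Real analyticity then forces the curvature to vanish identically on $M^*$. This contradicts Theorem~\ref{mainthm4} applied to $M^*$.

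\textbf{Main obstacle.} The only points needing care are the identification in Step 2 and the connectedness used in Step 3; the rest is a direct citation of earlier results. Two technical remarks:
\begin{itemize}
\item The curvature tensor is a real analytic function on the unit sphere bundle. It is cleanest to argue that the full curvature tensor of a Kähler metric vanishes whenever the holomorphic sectional curvature does, by polarization. Vanishing of a real analytic tensor on an open set then propagates to all of $M^*$.
\item The equivalence in the statement is immediate: for Kähler metrics, identically vanishing holomorphic sectional curvature is the same as flatness.
\end{itemize}
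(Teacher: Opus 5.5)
Your proposal is correct and follows the paper's argument. The paper likewise removes the hypersurface given by Proposition~\ref{99-01}, uses Docquier--Grauert to get that $M\setminus E$ is Stein, propagates flatness by real analyticity, and then invokes Theorem~\ref{mainthm4}; your Step~2 (the identification $A^2(M\setminus E)=A^2(M)$ via $L^2$ removability) and your connectedness remark make explicit two points the paper leaves implicit.
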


.

\section{Bergman metrics with positive constant holomorphic sectional curvature}
One might expect that a similar phenomenon, as in Theorem \ref{mainthm1}, holds in the case of constant positive sectional curvature. However, our examples in this section, based on the theory of  hyper-elliptic algebraic manifolds \cite{GH}, show that the situation is quite different even in dimension one. We next restate Proposition \ref{333} as follows:



\begin{proposition}\label{444}
For every \(n \in \mathbb{N}\), there exists a Stein manifold \(M\) of complex dimension \(n\), embedded in ${\mathbb C}^{n+1}$,  such that its Bergman space does not separate points in \(M\), yet the holomorphic sectional curvature of its Bergman metric is a positive constant. Moreover, there exist  Stein manifolds whose Bergman metric is well defined but whose Bergman space does not separate points, and whose scalar curvature can  be a positive, negative, or zero constant.
\end{proposition}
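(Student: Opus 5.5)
The construction starts in dimension one with a hyperelliptic curve. Take an affine hyperelliptic curve
\[
M_0=\{(z,w)\in\CC^2:\ w^2=P(z)\},
\]
where \(P\) has distinct roots and degree \(2g+1\) or \(2g+2\). Then \(M_0\) is a compact hyperelliptic Riemann surface \(X\) of genus \(g\) with one or two points at infinity removed. In particular it is a smooth affine curve, hence Stein, and it sits in \(\CC^2=\CC^{1+1}\).

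The Bergman space \(A^2(M_0)\) consists of holomorphic 1-forms with \(|\int\phi\wedge\bar\phi|<\infty\). The plan is to show this space coincides with the space of holomorphic differentials of \(X\), with basis \(z^k\,dz/w\), \(k=0,\dots,g-1\).
\begin{itemize}
\item \(L^2\)-integrability near a puncture forbids a pole: a 1-form \(dt/t^m\) with \(m\ge1\) is not square integrable near \(t=0\).
\item Conversely, every holomorphic differential of \(X\) is integrable, since \(X\) is compact.
\end{itemize}
So \(A^2(M_0)=H^0(X,K_X)\), of dimension \(g\). The Bergman--Bochner map \(\B\) is then the canonical map of \(X\). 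For a hyperelliptic curve this canonical map factors through the hyperelliptic involution \((z,w)\mapsto(z,-w)\): in the coordinate \(z\) it is \([1,z,\dots,z^{g-1}]\) composed with the double cover \(X\to\PP^1\), i.e. the rational normal curve.

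It follows that \(\B(z,w)=\B(z,-w)\), so \(A^2(M_0)\) does not separate the pairs \((z,\pm w)\) with \(w\neq0\). On the other hand, base-point-freeness and separation of directions hold at every point with \(w\neq0\): there \(z\) is a local coordinate and the rational normal curve is immersed. At the branch points \(w=0\) the canonical map is ramified, so the metric degenerates. I would therefore remove these finitely many points (and their preimages, as needed); by \cite{DG} the result is still Stein, it still lies in \(\CC^2\), and by removability of isolated \(L^2\) singularities its Bergman space is unchanged. With an orthonormal basis suitably normalized with respect to the \(L^2\) inner products of \(z^k\,dz/w\), the Bergman metric is the pullback of the Fubini--Study metric of \(\PP^{g-1}\) under a map from the image of the rational normal curve.

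The main obstacle is to arrange that this pullback has \emph{constant} curvature, i.e. that it is a multiple of the Fubini--Study metric of \(\PP^1\) pulled back by \(z\). This requires the Gram matrix \(\big(\int z^j\bar z^k\,|dz|^2/|P(z)|\big)\) to be diagonal with entries \(\binom{g-1}{k}^{-1}c\), matching the Veronese embedding. My first attempt would be to choose \(P\) with large symmetry, for instance \(P(z)=z^{2g+2}-1\) or \(P(z)=z(z^{2g}-1)\). Rotations \(z\mapsto e^{i\theta}z\) of suitable order then force orthogonality of monomials of distinct degree, up to the needed range. The inversion \(z\mapsto1/z\), which exchanges \(z^k\,dz/w\) with \(z^{g-1-k}\,dz/w\), gives further relations. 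The remaining diagonal weights would then be computed via Beta integrals after substituting \(u=z^{2g+2}\). If the binomial ratios do not come out exactly, I would enlarge the family, for example to \(w^m=P(z)\) or to weighted forms, and tune a parameter.

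In dimension \(n\), take \(w^2=P(z_1,\dots,z_n)\) in \(\CC^{n+1}\) with \(P\) chosen so that \(A^2\) is spanned by \(z^\alpha\,dz_1\wedge\cdots\wedge dz_n/w\), \(|\alpha|\le d\). With the same symmetry and Beta-integral computations, the Bergman metric becomes the pullback of the Fubini--Study metric under a Veronese map of \(\PP^n\), which again has positive constant holomorphic sectional curvature, while \(w\mapsto-w\) still defeats point separation. For the scalar-curvature statement, I would take products \(M_1\times M_2\) of such examples with a ball or with \(\CC^k\)-type factors. Bergman kernels and metrics split over products, and separation fails as soon as it fails on one factor. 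Choosing the factors' curvatures (positive from the examples above, negative from \(\BB^k\)) and their dimensions appropriately balances the scalar curvature to a positive, negative, or zero constant.
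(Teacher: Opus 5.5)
Your overall architecture matches the paper's: a hyperelliptic double cover $w^2=P$, the involution $w\mapsto -w$ defeating point separation, removal of the branch locus, and products with balls for the scalar-curvature statement. However, the decisive step, constancy of the holomorphic sectional curvature, is left open, and as you set it up it generally fails. For genus $g\ge 3$ (or monomials of degree $d\ge 2$ in dimension $n$), the Bergman--Bochner map is full in $\PP^{g-1}$. Constant curvature together with Calabi's rigidity then forces it to be congruent to a Veronese map. With a diagonal Gram matrix $G_{kk}=\|z^k\,dz/w\|^2$ this means $G_{kk}^{-1}=c\binom{g-1}{k}\lambda^k$, which is a genuine moment identity for the weight $1/|P|$; symmetry alone does not supply it. Your first candidate already violates it. For $g=3$ and $P=z^8-1$, inversion gives $\|dz/w\|=\|z^2dz/w\|$, and the identity reads $\|dz/w\|\,\|z^2dz/w\|=2\|z\,dz/w\|^2$. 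Reduce the norms to integrals over $(0,1)$ of $J(s)=\int_0^{2\pi}|1-se^{i\phi}|^{-1}d\phi$, and use that $J$ is increasing. This gives $2\|z\,dz/w\|^2>\|dz/w\|\,\|z^2dz/w\|$, so the curvature is not constant. ``Enlarge the family and tune a parameter'' is not an argument. Your $n$-dimensional claim, that ``the same symmetry and Beta-integral computations'' produce a Veronese pullback, is unsupported for the same reason.

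The missing idea is to choose the degree so that no such condition arises. Make $A^2$ exactly $(n+1)$-dimensional, spanned by $\omega_0,z_1\omega_0,\dots,z_n\omega_0$ with $\omega_0=dz_1\wedge\cdots\wedge dz_n/w$. In dimension one this means genus $2$; the paper takes $y^2=1+x^6$. In dimension $n$ the paper takes $P=1+\sum_j z_j^{2n+4}$. Then $\B$ is a projective-linear image of $(z,w)\mapsto[1:z]$, hence a local biholomorphism into $\PP^n$ off $\{w=0\}$. So $\o_M^B=\B^*\o_{st}$ has holomorphic sectional curvature $2$, and no moment condition is needed; the symmetries $z_j\mapsto\tau_jz_j$ only serve to orthonormalize the basis explicitly.

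You would still have to prove the description of $A^2$ in dimension $n$, which your proposal simply posits. Three things are needed:
\emph{(i)} $\omega_0$ is holomorphic and nowhere zero across $\{w=0\}$, so every holomorphic $n$-form is $(a+bw)\omega_0$ with $a,b$ entire.
\emph{(ii)} Finiteness of $2\int(|a|^2/|P|+|b|^2)\,dV$ forces $b\equiv0$, and via $|P|\le 1+|z|^{2n+4}$ it forces $\deg a\le 1$.
\emph{(iii)} Conversely, $\int|z_\ell|^{2k}/|P|\,dV<\infty$ for $k=0,1$. This is Lemma~\ref{222}: local integrability across the smooth hypersurface $E$, plus a dyadic scaling argument at infinity using $dQ\neq 0$ off the origin for $Q=\sum z_j^{2n+4}$.

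Finally, drop the ``$\CC^k$-type factors'': $A^2(\CC^k)=0$, so such products have no Bergman metric. Products of copies of the punctured genus-$2$ curve with $\BB^n$, as in the paper, give scalar curvature $2m-n$.
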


We first describe how to construct such an example in the case of complex dimension one. 
Let $X$ be the  normalization of the   compactification in \(\mathbb{P}^2\) of the affine smooth curve $X_0$ in \(\mathbb{C}^2\) with coordinates \((x,y)\) defined by
\[
y^2=P(x)
:= 1+x^6.
\]
Then $X$ is an example of a compact hype-elliptic Riemann surface.  Note that  the holomorphic extension of the projection map $\pi$, which sends $(x,y)\in X_0$ to $x\in {\mathbb P}^1$, is a two-to-one branched holomorphic covering with exactly six branch points at  the roots of $x^6+1=0$.
Since the points at infinity  do not contribute ramification, by the Riemann--Hurwitz genus formula:
$$
2g(X)-2=2(2g({\mathbb P}^1)-2)+6,
$$
the geometric genus of $X$ is exactly $g=2$.

 Since \(X\) is compact,
\[
A^2(X)=H^0(X,K_X),
\]
where \(K_X\) denotes the canonical bundle. By the Riemann-Roch theorem, $\hbox{dim}_{\mathbb C}A^2(X)=g=2$.

Choose an orthonormal basis
\[
\{\omega_1,\omega_2\}
\]
of \(H^0(X,K_X)\). The Bergman kernel on the diagonal is
\[
K(z,\bar z)=i\sum_{j=1}^2 \omega_j(z)\wedge \overline{\omega_j(z)}
= i\,k_X(z,\bar z)\,dz\wedge d\bar z
\]
where in a local holomorphic chart \(z\), $\omega_j=f_j(z) dz$, so $k_X(z)=\sum_{j=1}^2 |f_j(z)|^2$. Hence, the Bergman metric is
\[
\omega =i\partial\bar\partial \log k_X(z,\bar z).
\]



Now let $X_0'=X_0\sm E_0$ with $E_0:= \{(x,y)=(a,0)\in {\mathbb C}^2: a^6=-1\}$.
By the removable singularity theorem for \(L^2\)-integrable holomorphic functions across a subset $E$ of isolated points,  the Bergman space of \(A^2(X\setminus E_0)\) can be identified with \(A^2(X)\). 
In particular,  
 the Bergman space $A^2(X_0 \setminus E_0)$ has complex dimension two.

Indeed, consider the holomorphic one-forms on $X_0$:
\[
\eta_1=\frac{dx}{y},
\qquad
\eta_2=\frac{x\,dx}{y}.
\]
Notice the inner product  for any $\omega, \eta\in A^2(X_0)$ is given by 
\[
( \omega,\eta)
=
i\int_X\omega\wedge\overline{\eta},
\]
One  verifies easily  that $\eta_1,\eta_2\in A^2(X_0)$. (We will verify this in greater detail later even in higher dimensions.). Write
\[
A=\|\eta_1\|^2>0,
\qquad
B=\|\eta_2\|^2>0.
\]

Let $\sigma(x,y)=(-x,y)$. Notice that
$\sigma^*(\eta_1\wedge \overline{\eta_2})
=-\eta_1\wedge \overline{\eta_2}$.
 We obtain that $$ (\eta_1,\eta_2)=
i\int_X\eta_1\wedge\overline{\eta_2}=i\int_X
\sigma^*(\eta_1\wedge\overline{\eta_2})=- (\eta_1,\eta_2).$$
Thus $(\eta_1,\eta_2)=0$ and an orthonormal basis of $A^2(X_0)$ is given by 
\[
\omega_1
=
\frac{1}{\sqrt A}\frac{dx}{y},
\qquad
\omega_2
=
\frac{1}{\sqrt B}
\frac{ xdx}{y}.
\]

Notice that the canonical embedding $\mathcal{B} := [\omega_1, \omega_2] = [1, \sqrt{\tfrac{A}{B}}\,x]$ from $X_0$ into $\mathbb{P}^1$ takes the same value at the points $(x,y)$ and $(x,-y)$. Hence, the Bergman space $A^2(X_0 \setminus E_0)$ does not separate points in $X_0 \setminus E_0$.
However, the Bergman metric on $X_0 \setminus E_0$ is the pullback of the Fubini--Study metric on $\mathbb{P}^1$, and is  locally isometric to it. In particular, this defines a well-defined Bergman metric on the Stein manifold $X_0'=X_0 \setminus E_0$ of complex dimension one, with constant Gaussian curvature $+2$. Nevertheless, its Bergman space fails to separate points.

Next, let $X_{0,m}'$ be the $m$-fold product of $X_0'$ with $m \in \mathbb{N}$. We define
\[
M := X_{0,m}' \times \mathbb{B}^{n} \subset \mathbb{C}^{2n+m}.
\]
Then $M$ is a Stein manifold of complex dimension $n+m$. Moreover,
\[
A^2(M)=A^2(X_{0,m}'){\otimes} A^2(\mathbb{B}^{n}).
\]
Therefore, the Bergman metric $\omega_M$ is well defined and has constant scalar curvature $S=2m-n$. Nevertheless, $A^2(M)$ does not separate points of $M$. Notice that $S$ can be positive, negative, or zero.

We now proceed to the higher-dimensional construction.

\noindent 

For $n\in \NN$, define
$$
X_0=\Big\{(z, w)\in \mathbb{C}^n\times \mathbb{C}: w^2=1+\sum_{j=1}^n z_j^{2n+4}\Big\}
$$
and $$
E=\{z\in \mathbb{C}^n:1+\sum_{j=1}^n z_j^{2n+4}=0\}
$$
Thus 
$$
\pi: X_0\setminus\{w=0\}\to \mathbb{C}^n \setminus E,\quad (z, w)\to z
$$
is an unbranched two-sheeted covering. Both $X_0 \setminus E$ and  $X_0$ are  Stein.
 Since  the holomorphic function space over $X_0$ is isomorphic to
$$
{\mathcal O}(X_0)={\mathcal O}(\mathbb{C}^{n+1})/\left(w^2-(1+\sum_{j=1}^n z_1^{2n+4})\right)
$$
as $X_0$ is a closed complex submanifold of $\mathbb{C}^{n+1}$,
 every holomorphic function on $X_0$ can be uniquely written as
$$
f=a(z_1, z_2)+b(z_1, z_2) w
$$
where $a, b$ are entire  holomorphic functions  on $\mathbb{C}^n$.

Recall the Bergman space is given by  square-integrable holomorphic $(n,0)$-forms
with inner product
\[
(\alpha,\beta)
 =b_n \int_{X_0} \alpha\wedge\overline{\beta}
\]
where $b_n=(i)^n (-1)^{\frac{n^2-n}{2}}=i^{n^2}$.

We now define $\omega_0={d z_1\wedge d z_2\wedge\cdots \wedge dz_n\over w}$. We claim that  $\omega_0$ is a well-defined holomorphic $n$-form with no zero at any point. This is obvious away from $E$. For $z_0=(z_{1,0},\cdots,z_{n,0})\in E$, assume without loss of generality that $z_{1,0}\not=0$. We can use $(z_2,\cdots, z_n, w)\approx (z_{2,0},\cdots z_{n, 0},0)$ for   holomorphic coordinates near $(z_{2,0},\cdots, z_{n, 0},0)$. Since along $X_0$, 
$$2wdw=(2n+4)\sum_{j=1}^n z_j^{2n+3}dz_1,$$
we see that $\omega_0= \frac{1}{(2n+4) z_1^{2n+3}} dw\wedge dz_2\wedge\cdots \wedge dz_n$ near $ (z_0,0)$, which is holomorphic and non-zero near $ (z_0,0)$.

Therefore any holomorphic $2$-form  $\omega$ on $X_0$ can be written as
$$
\omega=f \omega_0=(a(z)+b(z)w) {dz_1\wedge \cdots \wedge dz_n\over w}=({a(z)\over w}+b(z) ){dz_1\wedge \cdots \wedge dz_n},\quad \omega_0={d z_1\wedge\cdots\wedge d z_n\over w},
$$
with $a(z), b(z)$ entire on $\mathbb{C}^n$.
Let $U_1=\mathbb{C}\setminus [0, \infty)$ and $U_2=\mathbb{C}\setminus (-\infty, 0]$ and $\lambda_j$ be holomorphic functions on $U_j$ such that $\lambda_j(s)^2=s$ for any $s\in U_j$. Let
$$
\gamma_j(z)=\lambda_j(1+\sum_{j=1}^n z_j^{2n+8}), \quad \hbox{when}\ 1+\sum_{j=1}^n z_j^{2n+4}\in U_j.
$$

 Then $w^2=P(z)$ has two solutions,
\[
w=\gamma_j(z) \quad \text{and} \quad w=-\gamma_j(z), \qquad \hbox{for  }1+\sum_{j=1}^n z_j^{2n+4}\in U_j.
\]
Let
\[
W_1=\left\{z\in \mathbb{C}^n\setminus E:\ 1+\sum_{j=1}^n z_j^{2n+4}\in U_1\right\}, \qquad
W_2=\left\{z\in \mathbb{C}^n\setminus E:\ 1+\sum_{j=1}^n z_j^{2n+4}\in U_2\right\}.
\]
Let $\chi_1$ and $\chi_2$ be a partition of unity subordinate to $\{W_1, W_2\}$. Then

\begin{eqnarray*}
(\omega, \omega)&=&
\sum_{j=1}^2 \int_{ \mathbb{C}^n\setminus E}\chi_j \left(\left |{a(z)\over \gamma_j(z)}+b(z)\Big|^2  +
\right |{a(z)\over -\gamma_j(z)}+b(z)\Big|^2 \right )dV(z)\\
&=&
2\int_{\mathbb{C}^n\setminus E} \left({|a(z)|^2\over |1+\sum_{j=1}^n z_j^{2n+4}|}+|b(z)|^2 \right)dV(z),
\end{eqnarray*}
where $dV=(i)^{n^2} d z_1\wedge d z_2\wedge\cdots \wedge dz_n \wedge d\overline{z_1}\wedge d \overline{z_2}\wedge\cdots \wedge d\overline{z_n}. $

Notice that $\int_{\mathbb{C}^n}|b(z)|^2 dV=\int_{\mathbb{C}^n\setminus E}|b|^2 dv<\infty$ implies that $b(z)\equiv 0$.  If $\omega=(a+bw)\omega_0\in A^2(X_0)$, then $b\equiv 0$ and
$$
\infty>\int_{\mathbb{C}^n\setminus E} {|a(z)|^2 \over |1+\sum_{j=1}^n z_j^{2n+4}|} dV(z)\ge
\int_{\mathbb{C}^n } {|a(z)|^2 \over 1+|z|^{2n+4}} dV(z).
$$

Write $z_j=r_je^{i\theta_j}$. Then the Taylor expansion of $a$ is
\[
a(z)=\sum_{\alpha=(\alpha_1,\cdots,\alpha_n)\ge 0} c_\alpha z^\alpha
=\sum_{\alpha\ge 0} c_\alpha r_1^{\alpha_1}\cdots r_n^{\alpha_n}
e^{i(\alpha_1\theta_1+\cdots+\alpha_n\theta_n)}.
\]

Then 
$$\int_{\mathbb{C}^n } {|a(z)|^2 \over |1+|z|^{2n+4}} dV(z)=\sum_{\alpha\ge 0}\int_{r_1,\cdots,r_n\ge 0}\frac{|c_\alpha|^2r_1^{2\alpha_1+1}\cdots r_n^{2\alpha_n+1}}{1+\left(r_1^2+\cdots r_n^2\right)^{n+2}} dr_1\cdots dr_n<\infty.
$$
This implies that $a(z)$ must be a polynomial of degree no larger than $k$ with 
$$
2k+n+(n-1)-(2n+4) <-1 \Rightarrow k<2.
$$
Therefore, if $\omega=(a+bw) \omega_0\in A^2(X_0)$ then $b=0$ and 
$$
a(z)=c_0+\sum_{j=1}^nc_j z_j,\ \  c_j\in \mathbb C.
$$
Next, we prove the following lemma, which may already appear in the literature. Since we were unable to find a reference, we provide a detailed proof.

\begin{lemma}\label{222}
Let
\[
P(z)=1+\sum_{j=1}^{n} z_j^{2n+4},
\qquad
E=\{z\in\mathbb C^n:P(z)=0\}.
\]
Then, for every \(1\leq \ell\leq n\),
\[
\int_{\mathbb C^n\setminus E}
\frac{|z_\ell|^{2k}}{|P(z)|}\,dV(z)<\infty, \ \ \hbox{where  }k=0,1.
\]
\end{lemma}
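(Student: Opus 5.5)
We need to show $\int_{\mathbb C^n} |z_\ell|^{2k}/|P(z)|\,dV<\infty$ for $k=0,1$, with $P(z)=1+\sum_j z_j^{2n+4}$ and $d=2n+4$. Two issues must be handled: integrability near the zero set $E$, and decay at infinity. The integrand is at most $(1+|z|^2)/|P|$, so it suffices to treat $1/|P|$ and $|z|^2/|P|$. We split $\mathbb C^n$ into a large ball $\{|z|\le R\}$ and its complement.

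\emph{Local part.} $E$ is a smooth hypersurface. If $P(z)=0$ and $\nabla P(z)=0$, then $d z_j^{d-1}=0$ for all $j$, so $z=0$; but $P(0)=1\neq0$. Near any $z_0\in E$ we may therefore take $u=P(z)$ as one holomorphic coordinate, completed by $n-1$ of the $z_j$. The Jacobian is bounded near $z_0$, and $\int_{|u|<\delta}|u|^{-1}\,dA(u)=2\pi\delta<\infty$. Hence $1/|P|$ is locally integrable near each point of $E$. It is continuous away from $E$, so by compactness of $E\cap\{|z|\le R\}$ it is integrable on the ball.

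\emph{Region at infinity.} This is the main obstacle, because $|P(z)|$ is not comparable to $1+|z|^{d}$: the homogeneous part $Q(z)=\sum z_j^{d}$ vanishes on the real cone $\{Q=0\}$, and $E$ runs off to infinity along it. We use polar coordinates $z=r\zeta$ with $\zeta\in S^{2n-1}$, so that $P(r\zeta)=1+r^{d}Q(\zeta)$ and $dV=c\,r^{2n-1}\,dr\,d\sigma(\zeta)$. For fixed $\zeta$ with $Q(\zeta)=q\neq0$, substitute $s=r^{d}|q|$ in the radial integral
\[
\int_R^\infty \frac{r^{2k+2n-1}}{|1+r^{d}q|}\,dr .
\]
Since $|1+r^dq|\ge |r^d|q|-1|$, this is at most
\[
C|q|^{-(2n+2k)/d}\int \frac{s^{(2n+2k)/d-1}}{|1-s|}\,ds ,
\]
and this still has a logarithmic singularity at $s=1$. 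So the crude bound by $|r^d|q|-1|$ is too lossy, and the argument of $q$ must be used. We have $|1+sq/|q||\ge |\mathrm{Im}(q/|q|)|\,s$, and the loss shows up only in angular directions where $q$ is nearly negative real. The plan is therefore to keep the two-dimensional structure: instead of integrating radially, integrate over the complex line $t\mapsto t\zeta$ with $t\in\mathbb C$, writing $z=t\zeta$ with $\zeta$ ranging over $\mathbb P^{n-1}$, i.e.\ coordinates on the blow-up at the origin. On each line, $P=1+t^{d}Q(\zeta)$, and
\[
\int_{\mathbb C}\frac{|t|^{2k+2n-2}}{|1+t^{d}Q(\zeta)|}\,dA(t)
\]
becomes, after $u=t^{d}Q(\zeta)$,
\[
\frac{C}{|Q(\zeta)|^{(2n+2k)/d}}\int_{\mathbb C}\frac{|u|^{(2n+2k)/d-1}}{|1+u|}\,dA(u).
\]
The $u$-integral converges: near $u=-1$ the singularity is $1/|u+1|$, integrable in two real dimensions; near $u=0$ the exponent $(2n+2k)/d-1>-2$; and at infinity the integrand is $O(|u|^{(2n+2k)/d-2})$ with $(2n+2k)/d<1$ for $k\le1$, since $2n+2<2n+4$. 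This is exactly where $k<2$ enters.

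It remains to check that $\int_{\mathbb P^{n-1}}|Q(\zeta)|^{-(2n+2k)/d}\,d\sigma<\infty$. The hypersurface $\{Q=0\}\subset\mathbb P^{n-1}$ is the smooth Fermat hypersurface, and near it $|Q|^{-\gamma}$ is integrable in the real codimension-two transverse direction whenever $\gamma<1$. Here $\gamma=(2n+2k)/d<1$, so the angular integral converges, and combining this with the local part finishes the proof.
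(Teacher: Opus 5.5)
Your argument is sound in outline and takes a different route from the paper, but the one-variable reduction contains a Jacobian slip that you need to fix.

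\textbf{Comparison with the paper.} The paper handles infinity with dyadic annuli $\{R<|z|<2R\}$. The scaling $z=R\zeta$ produces the factor $R^{2n+2k-m}$, $m=2n+4$, times $\int_{1<|\zeta|<2}|\zeta_1|^{2k}|Q(\zeta)+R^{-m}|^{-1}\,dV(\zeta)$. This is bounded uniformly in $R$ because $dQ\neq0$ on the annulus, and a geometric series finishes. You instead fiber $\mathbb C^n$ by complex lines through the origin and use exact homogeneity. This factors the integral into a one-variable integral times $\int_{\mathbb P^{n-1}}|Q|^{-\gamma}\,d\sigma$, with $\gamma=(2n+2k)/m$; smoothness of the Fermat hypersurface replaces the paper's $dQ\neq0$.

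What your factorization buys:
\begin{itemize}
\item It avoids the uniform-in-$\varepsilon$ estimate.
\item It treats all of $E$ at once: the line integrals already contain the singularity at $u=-1$, so your separate local part is redundant.
\item It shows that the threshold $k<2$ is sharp.
\end{itemize}
What the paper's scaling buys: it only needs $R^{-m}P(R\zeta)$ to be a small perturbation of $Q$, so it would survive lower-order terms in $P$, which your exact factorization would not.

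\textbf{The slip.} For $u=t^mQ(\zeta)$ (your $d$ is $m$) one has $dA(u)=m^2|t|^{2m-2}|Q(\zeta)|^2\,dA(t)$, and the map is $m$-to-one. Hence
\[
\int_{\mathbb C}\frac{|t|^{2n+2k-2}}{|1+t^mQ(\zeta)|}\,dA(t)=\frac{1}{m\,|Q(\zeta)|^{\gamma}}\int_{\mathbb C}\frac{|u|^{\gamma-2}}{|1+u|}\,dA(u).
\]
Your exponent $\gamma-1$ is what the one-dimensional radial substitution $s=r^m|q|$ gives, not the two-dimensional one. With your exponent the $u$-integral diverges: its integrand is of order $|u|^{\gamma-2}$ at infinity, and in two real dimensions one needs an exponent below $-2$. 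Your check $\gamma<1$ is the one-dimensional criterion.

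With the correct exponent, everything works:
\begin{itemize}
\item Near $0$ the integrand is $|u|^{\gamma-2}$, which is integrable since $\gamma>0$.
\item At infinity it is of order $|u|^{\gamma-3}$, integrable exactly when $\gamma<1$, i.e.\ $2n+2k<2n+4$.
\end{itemize}
So your conclusion and the role of $k<2$ survive. More simply, the rescaling $t=\lambda s$ with $\lambda^m=1/Q(\zeta)$ gives $|Q(\zeta)|^{-\gamma}\int_{\mathbb C}|s|^{2n+2k-2}|1+s^m|^{-1}\,dA(s)$ directly.

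A minor point: $\int_{\mathbb P^{n-1}}|Q|^{-\gamma}\,d\sigma$ converges for every $\gamma<2$ (real codimension two). So the binding constraint is the one at infinity along each line, not the angular one.
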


\begin{proof}
For simpliicty, we only verify the case $k=1$.
By symmetry, it suffices to consider \(\ell=1\). We divide the proof
into local integrability near \(E\) and integrability at infinity.

We showed that \(E\) is a smooth complex hypersurface as $dP(z)\not = 0$ along $E$. Therefore, near each point of \(E\), the holomorphic implicit function
theorem provides local holomorphic coordinates
\[
u_1=P(z),\quad u_2,\ldots,u_n.
\]
Since \(|z_1|^2\) is locally bounded, the integrand is bounded by a
constant multiple of \(1/|u_1|\). Moreover,
\[
\int_{|u_1|<\varepsilon}\frac{dA(u_1)}{|u_1|}
=
2\pi\int_0^\varepsilon dr
<\infty.
\]
Here $dA(u_1)$ is the Lebesque measure of ${\mathbb C}$.
Thus
\[
\frac{|z_1|^2}{|P(z)|}
\]
is locally integrable on \(\mathbb C^n\). In particular, its integral
over every bounded subset of \(\mathbb C^n\setminus E\) is finite.

It remains to study the behavior at infinity. Set
\[
m=2n+4,
\qquad
Q(z)=\sum_{j=1}^{n}z_j^m.
\]
For \(R\geq 1\), let
\[
A_R=\{z\in\mathbb C^n:R<|z|<2R\}.
\]
Using the change of variables \(z=R\zeta\), we obtain
\[
dV(z)=R^{2n}dV(\zeta),
\qquad
|z_1|^2=R^2|\zeta_1|^2,
\]
and
\[
P(R\zeta)
=
1+R^mQ(\zeta)
=
R^m\bigl(R^{-m}+Q(\zeta)\bigr).
\]
Consequently,
\begin{align*}
\int_{A_R}\frac{|z_1|^2}{|P(z)|}\,dV(z)
&=
R^{2n+2-m}
\int_{1<|\zeta|<2}
\frac{|\zeta_1|^2}
{|Q(\zeta)+R^{-m}|}\,dV(\zeta) \\
&=
R^{-2}
\int_{1<|\zeta|<2}
\frac{|\zeta_1|^2}
{|Q(\zeta)+R^{-m}|}\,dV(\zeta).
\end{align*}
We claim that there exists a constant \(C>0\), independent of
\(0\leq\varepsilon<<1\), such that
\[
\int_{1<|\zeta|<2}
\frac{dV(\zeta)}{|Q(\zeta)+\varepsilon|}
\leq C.
\]
Indeed, the only critical point of \(Q\) is the origin, because
\[
\frac{\partial Q}{\partial \zeta_j}
=
m\zeta_j^{m-1}.
\]
Hence \(dQ\neq 0\) on the compact annulus
\(\{1\leq|\zeta|\leq2\}\). Near every level hypersurface
\(\{Q+\varepsilon=0\}\), one may use \(Q+\varepsilon\) as one
holomorphic coordinate. The local integral is therefore reduced to
\[
\int_{|u|<\delta}\frac{dA(u)}{|u|}<\infty.
\]
By compactness, these local estimates can be chosen uniformly in
\(\varepsilon<<1\).

Since \(|\zeta_1|\leq2\) on the annulus, it follows that
\[
\int_{A_R}\frac{|z_1|^2}{|P(z)|}\,dV(z)
\leq \frac{C}{R^2}.
\]
Taking \(R=2^k\) and summing over dyadic annuli gives
\begin{align*}
\int_{\{|z|>1\}\setminus E}
\frac{|z_1|^2}{|P(z)|}\,dV(z)
&\leq
C\sum_{k=0}^{\infty}2^{-2k}
<\infty.
\end{align*}
Combining this estimate with local integrability on bounded subsets
proves that
\[
\int_{\mathbb C^n\setminus E}
\frac{|z_1|^2}{|P(z)|}\,dV(z)<\infty.
\]
\end{proof}

Lemma \ref{222} shows that $\{\omega_0,z_1\omega_0, \cdots, z_n\omega_0\}$ forms a basis of $A^2(X_0)$.
Write $\eta_0=\omega_0$ and $\eta_j=z_j\omega_0$ for $n\le j\ge 1$.

 Let
$$
A=(\eta_0, \eta_0),\quad B=(\eta_1,\eta_1)=\cdots=(\eta_n,\eta_n)
$$ which 
are finite and positive. 
We claim that $\{{1\over \sqrt{A}} \eta_0, {1\over \sqrt{B}} \eta_1,\cdots, {1\over \sqrt{B}}\eta_n\}$ is an  orthonormal basis for $A^2(X_0)$. This will follow from the fact $(\eta_j, \eta_k)=0$ if $j\ne k$. 
To prove the last fact, we consider complex numbers $\tau_1,\cdots, \tau_n$ satisfying:  $\tau_j^{2n+4}=1$,  we define
$$
\Phi_{\tau}(z, w)=(\tau_1 z_1, \cdots, \tau_n z_n, w),\quad \tau=(\tau_1,\cdots, \tau_n).
$$
Then $\Phi_{\tau}: X_0\to X_0$ is a biholomorphic automorphism and
$$
(\Phi_{\tau}^*\alpha, \Phi_{\tau}^*\beta)=(\alpha, \beta)
$$
Since
$$
\Phi_{\tau}^*\eta_0=(\prod_{j=1}^n \tau_j) \eta_0, \quad \Phi_{\tau}^* \eta_j =\tau_j(\prod_{\ell=1}^n\tau_\ell)    \eta_\ell.
$$
Taking $\tau_j=(-1)^n\sqrt{-1}$ and $\tau_k=1$ if $k\ne j$, one has
$$
(\eta_0, \eta_j)
=(-1)^{n+1}\sqrt{-1} (\eta_0, \eta_j)
$$
This implies $(\eta_0, \eta_j)=0$. Similarly, by choosing a different $\tau_j$, one can prove $(\eta_j, \eta_k)=0$ if $j\ne k$. Therefore,
the Bergman space of $X_0\sm E$ is $(n+1)$ dimensional and has an orthonormal basis:
\[
\frac{1}{\sqrt A}\omega_0,\qquad
\frac{1}{\sqrt B}z_1 \omega_0,\cdots,\qquad
\frac{1}{\sqrt B} z_n \omega_0.
\]
 Consequently, since $\log |w|^2=\log |1+\sum_{j=1}^n z_j^{2n+4}|$ is pluriharmonic, we have
\[
\omega_B
 =i\partial\bar\partial\log(1+\frac{A}{B}\sum_{j=1}^n |z_j|^2)
\]
which is locally isometric to the Fubini-Study metric of ${\mathbb P}^n$
by the map $\sigma(
z,w)=[1, \sqrt{\frac{A}{B}}z].$ 
The holomorphic sectional curvature is therefore identical to $2$. Since the Bergman-Bochner map or the canonical map is given by $${\cal B}:X_0\sm E \rightarrow {\mathbb P}^n: (z,w)\to [1, \sqrt{\frac{A}{B}}z],$$ ${\cal B}$  is a two to one map as ${\cal B}(z,w)={\cal B}(z,-w)$. Thus, $A^2(X_0\sm E)$ does not separate the points in  the Stein manfold $X_0\sm E$.  We have completed the proof of the proposition.


\noindent Xiaojun Huang (huangx@math.rutgers.edu): Department of Mathematics, Rutgers Univer-
sity, New Brunswick, NJ 08903, USA

\noindent 
Song-Ying Li (sli@uci.edu): Department of Mathematics, University of California, Irvine,
Irvine, CA 92697, USA

\end{document}